\newcommand{\referenza}{}
\newtheorem{thm}{Theorem}[section]
\newtheorem*{thm*}{Theorem \referenza}
\newtheorem*{cor*}{Corollary \referenza}
\newtheorem*{lem*}{Lemma \referenza}
\newtheorem{prop}[thm]{Proposition}
\newtheorem*{prop*}{Proposition \referenza}
\newtheorem*{conj*}{Conjecture \referenza}
\newtheorem{defi}[thm]{Definition}
\DeclareMathOperator{\im}{im}
\newcommand{\R}{\mathbb{R}}
\newcommand{\C}{\mathbb{C}}
\newcommand{\del}{\partial}
\newcommand{\delbar}{\overline{\partial}}
\numberwithin{equation}{section}
\newcommand{\etalchar}[1]{$^{#1}$}
\title[Geometric formalities along CRF]{Geometric formalities along the Chern-Ricci flow}
\author{Daniele Angella}
\address[Daniele Angella]{
Dipartimento di Matematica e Informatica ``Ulisse Dini''\\
Universit\`a degli Studi di Firenze\\
viale Morgagni 67/a\\
50134 Firenze, Italy
}
\email{daniele.angella@gmail.com}
\email{daniele.angella@unifi.it}
\author{Tommaso Sferruzza}
\address[Tommaso Sferruzza]{
Dipartimento di Matematica e Informatica ``Ulisse Dini''\\
Universit\`a degli Studi di Firenze\\
viale Morgagni 67/a\\
50134 Firenze, Italy
}
\email{tommaso.sferruzza@stud.unifi.it}
\keywords{formality, Kotschick geometric formality, Hermitian geometric formalities, Bott-Chern cohomology, Chern-Ricci flow}
\thanks{The first author is supported by the Project PRIN ``Varietà reali e complesse: geometria, topologia e analisi armonica'', by SIR2014 project RBSI14DYEB ``Analytic aspects in complex and hypercomplex geometry'', and by GNSAGA of INdAM}
\subjclass[2010]{53C55, 53C44, 32J15, 57T10} 
\date{\today}
\begin{document}

\begin{abstract}
In this paper, we study how the notions of geometric formality according to Kotschick and other geometric formalities adapted to the Hermitian setting evolve under the action of the Chern-Ricci flow on class VII surfaces, including Hopf and Inoue surfaces, and on Kodaira surfaces.
\end{abstract}

\maketitle

\section*{Introduction}
This note is related to the problem of understanding  the algebraic structure of cohomologies of complex manifolds.

On a differentiable manifold $X$, the differential complex of forms has a structure of algebra. The wedge product induces an algebra structure in de Rham cohomology, but {\em not in a uniform way}. We mean that, in general, it is not possible to choose a system of representatives having itself a structure of algebra.
This is evident if we want to choose harmonic representatives with respect to a Riemannian metric on a compact differentiable manifold: indeed, in the words of Sullivan, there is an “incompatibility of wedge products and harmonicity of forms” \cite[page 326]{sullivan}. In general, on a compact differentiable manifold $X$, the choice of representatives for the de Rham cohomology yields just a structure of $A_{\infty}$-algebra in the sense of Stasheff \cite{stasheff}, by the Homotopy Transfer Principle by Kadeishvili \cite{kadeishvili}, see {\itshape e.g.} \cite{zhou, merkulov}. We refer to \cite{lu-palmieri-wu-zhang, bujis-morenofernandez-murillo} for understanding the relationship between the higher multiplications and the Massey products. When such an $A_\infty$-algebra is actually an algebra, we say that $X$ is {\em formal} in the sense of Sullivan \cite{sullivan}. In this case, the differential graded algebra of forms and the dga of de Rham cohomology share the same minimal model, which contains information on the rational homotopy: therefore the rational homotopy type of $X$ ``can be computed formally from'' the cohomology ring $H^\bullet_{dR}(X;\mathbb R)$; see \cite{sullivan} for details. As a special case, when we can fix a Riemannian metric $g$ such that the {\em harmonic} representatives with respect to $g$ have a structure of algebra, say that $X$ is {\em geometrically formal} in the sense of Kotschick \cite{kotschick}.

On a complex manifold $X$, the double complex of forms $(\wedge^{\bullet,\bullet}X, \partial, \overline\partial)$ has a structure of bi-differential bi-graded algebra. Then it is possible to investigate analogous notions for the Dolbeault cohomology. Neisendorfer and Taylor introduced the notion of {\em (strictly) Dolbeault formality} and ``complex homotopy groups'' in \cite{neisendorfer-taylor}; for Hermitian manifolds, the notion of {\em (strictly) geometric-Dolbeault formality} has been investigated by Tomassini and Torelli in \cite{tomassini-torelli}.

Besides Dolbeault cohomology, Bott-Chern $H^{\bullet,\bullet}_{BC}(X):=\frac{\ker\partial\cap\ker\overline\partial}{\mathrm{im}\,\partial\overline\partial}$ \cite{bott-chern} and Aeppli $H^{\bullet,\bullet}_{A}(X):=\frac{\ker\partial\overline\partial}{\mathrm{im}\,\partial+\mathrm{im}\,\overline\partial}$ \cite{aeppli} cohomologies provide further cohomological invariants on complex manifolds. They are directly related by morphisms to both the Dolbeault and the de Rham cohomology, and allow to numerically characterize the strong Hodge decomposition given by the $\partial\overline\partial$-Lemma \cite{angella-tomassini-3, angella-tardini}.
The notion of  {\em geometrically-Bott-Chern formality} for Hermitian manifolds is introduced and studied in \cite{angella-tomassini-Formality}.
Here, {\em Bott-Chern-harmonic} forms are in the kernel of the fourth-order Bott-Chern Laplacian introduced in \cite{kodaira-spencer-3, schweitzer}.
On the other side, it is not clear how to study a notion of ``Bott-Chern and Aeppli formality''. However, the Bott-Chern cohomology has a structure of algebra, while the Aeppli cohomology is just a $H_{BC}(X)$-module. We refer to the discussion in \cite{angella-tardini, angella-Bielefeld}.

In this note, we focus on geometric formalities of complex manifolds and its dependence on the Hermitian metric. In \cite{tomassini-torelli, tardini-tomassini}, the authors study the behaviour of Dolbeault formality, respectively geometric-Bott-Chern formality, under small deformations of the complex structure.
Here, we keep the complex structure fixed, and we study geometric formalities with respect to Hermitian metrics evolving along a geometric flow. More precisely, we consider the {\em Chern-Ricci flow} \cite{gill, tosatti-weinkove-JDG} that evolves Hermitian structures $\omega(t)$ by the Chern-Ricci form,
$$ \frac{\partial}{\partial t} \omega = - \mathrm{Ric}^{Ch}(\omega) , $$
and we study the possible algebra structure on the space of (de Rham, Dolbeault, Bott-Chern, Aeppli) harmonic forms with respect to $\omega(t)$ varying $t$.

We study in details geometric formality according to Kotschick for a whole class of surfaces evolving by the Chern-Ricci flow, \emph{i.e.} compact complex non-K\"ahler surfaces with Kodaira dimension $\text{Kod}(X)=-\infty$ and first Betti number $b_1(X)=1$, known as \emph{class VII} of the Enriques-Kodaira classification. In particular, we first rule out class VII surfaces with second Betti number $b_2>0$ by applying arguments as in \cite{kotschick}. Then, we exploit the structure of quotients of Lie groups with invariant complex and Hermitian structure on the only class VII surfaces with $b_2=0$, that is Hopf and Inoue surfaces see \cite{bogomolov, kodaira-1, li-yau-zheng, teleman-ijm}, in order to reduce the description of harmonic forms and the equation of the Chern-Ricci flow of such surfaces at the level of invariant forms and thus make explicit computations. We obtain the following.

\renewcommand{\referenza}{\ref{thm:main}}
\begin{thm*}
On class VII surfaces, the Chern-Ricci flow preserves the notion of geometric formality according to Kotschick starting at initial invariant metrics.
\end{thm*}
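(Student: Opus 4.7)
My plan is to follow the dichotomy outlined just before the theorem statement: separate class VII surfaces into those with $b_2(X)>0$ and those with $b_2(X)=0$ (i.e., Hopf and Inoue surfaces), and to handle each case by a reduction to a finite-dimensional problem.

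For the case $b_2(X)>0$, I would argue that Kotschick's topological obstructions to geometric formality of compact oriented $4$-manifolds already prevent the existence of \emph{any} Riemannian metric whose harmonic forms are closed under wedge product. Since a class VII surface satisfies $b_1(X)=1$, the one-dimensional space $\mathcal{H}^1$ forces vanishing of certain cup-product images in $H^2(X;\mathbb{R})$; combined with the sharp Betti-number inequalities from \cite{kotschick} and the known structure of the intersection form on class VII surfaces with $b_2>0$, this forces the non-existence of any geometrically formal metric. Hence the Kotschick formality condition fails identically at every time, and it is trivially preserved by the flow.

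For the case $b_2(X)=0$, i.e., $X=\Gamma\backslash G$ for a real Lie group $G$ endowed with a left-invariant complex structure $J$ and a cocompact lattice $\Gamma$, my strategy is to transfer all the computations to the level of left-invariant forms on $G$. I would first recall the Nomizu-type result that, on Hopf and Inoue surfaces with the standard complex structures, the inclusion of invariant forms into all smooth forms induces an isomorphism in de Rham cohomology, so that invariant harmonic representatives with respect to an invariant Hermitian metric are genuine harmonic representatives of the cohomology. Next, since the Chern-Ricci form of a $G$-invariant Hermitian metric is again $G$-invariant, the Chern-Ricci flow equation starting at an invariant $\omega_0$ reduces to an ODE on the finite-dimensional open cone of positive $(1,1)$-forms in $\wedge^{1,1}_{\mathrm{inv}}$, so that $\omega(t)$ stays invariant for all $t$ in the maximal existence interval. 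I would then solve this ODE explicitly using the structure equations for each of the underlying Lie algebras, as in the presentations of Hopf, $S_M$-Inoue, $S^+$-Inoue, and $S^-$-Inoue surfaces given in the references cited before the statement.

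With $\omega(t)$ in hand, the remaining task is to exhibit an invariant basis of harmonic forms of each degree, compute the pairwise wedge products, and check case by case that at time $0$ the algebra $\mathcal{H}^\bullet_{\omega(0)}$ is closed under wedge product if and only if the same holds at time $t$. The main obstacle I anticipate will be the Inoue surfaces, where the underlying solvable (non-nilpotent) Lie algebra produces a Hodge star $\ast_{\omega(t)}$ whose matrix depends non-trivially on the evolving metric coefficients, so that identifying which invariant forms remain co-closed along the flow requires a careful bookkeeping of these coefficients. The reason the argument should nevertheless go through is that the algebra structure on $\wedge^{\bullet}_{\mathrm{inv}}$ is fixed, the space of invariant harmonic forms at each time is cut out by linear equations whose coefficients depend polynomially (or rationally) on the coordinates of $\omega(t)$, and the explicit form of the ODE will show that the closure of $\mathcal{H}^\bullet_{\omega(t)}$ under wedge product is governed by algebraic identities on the structure constants of $\mathfrak{g}$ that are independent of $t$; verifying this in each case is the heart of the proof.
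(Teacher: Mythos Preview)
Your proposal is correct and follows essentially the same overall architecture as the paper: rule out $b_2>0$ via Kotschick's obstructions, then reduce the $b_2=0$ cases (Hopf, Inoue $S_M$, Inoue $S^\pm$) to finite-dimensional computations with invariant forms and invariant metrics, noting that the flow preserves invariance.

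The one point worth flagging is that you are anticipating more work than is actually needed in the $b_2=0$ case. You plan to solve the Chern-Ricci ODE explicitly and then track, as a function of $t$, whether $\mathcal{H}^\bullet_{\omega(t)}$ is closed under wedge product. The paper instead proves the stronger statement that \emph{every} invariant Hermitian metric on a Hopf or Inoue surface is geometrically formal, so the flow preservation follows for free and no time-dependence needs to be tracked. The mechanism is simple: on these surfaces the de Rham cohomology has $b_0=b_1=b_3=b_4=1$ and $b_2=0$; the class in $H^1$ is represented by $\varphi^2-\bar\varphi^2$, which is the \emph{only} invariant form in its cohomology class and hence is harmonic for every invariant metric; the $H^3$-generator is then just $\ast_g(\varphi^2-\bar\varphi^2)$; and the wedge of any invariant $1$-form with any invariant $3$-form on a $4$-manifold is automatically a multiple of the volume form. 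So the ``careful bookkeeping'' of $\ast_{\omega(t)}$ you anticipate for the Inoue cases evaporates: once you observe the Betti-number pattern, there is nothing left to check. Your more laborious route would still arrive at the same conclusion, but you can shortcut it considerably.
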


We also study the evolution of geometric formality according to Kotschick on other compact complex non-K\"ahler surfaces that are diffeomorphic to solvmanifolds, \emph{e.g.} Kodaira surfaces. Since any complex structures on such surfaces is left-invariant, see \cite[Theorem 1]{hasegawa-jsg}, we focus on invariant forms also in this case.

\renewcommand{\referenza}{\ref{prop:Kodaira-1}}
\begin{prop*}
On Kodaira surfaces, geometric formality according to Kotschick is preserved by the Chern-Ricci flow starting at initial invariant metrics.
\end{prop*}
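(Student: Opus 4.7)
The plan is to reduce the analysis to the level of left-invariant forms, and then to show that the Chern-Ricci flow is stationary on such invariant Hermitian metrics, which makes the preservation of Kotschick formality vacuous.

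First, by the cited result of Hasegawa, any complex structure on a Kodaira surface $X$ is induced by a left-invariant complex structure on the $4$-dimensional real nilpotent (primary case) or solvable (secondary case) Lie group $G$ whose cocompact discrete quotient is $X$. In the primary case, I would fix an invariant coframe of $(1,0)$-forms $\{\varphi^1,\varphi^2\}$ with structure equations of Heisenberg type, of the form $d\varphi^1=0$ and $d\varphi^2 = c\, \varphi^1\wedge\overline{\varphi^1}$ with $c\neq 0$. A direct computation gives $d(\varphi^1\wedge\varphi^2)=0$, so $\Omega:=\varphi^1\wedge\varphi^2$ is a nowhere-vanishing global holomorphic $(2,0)$-form, providing an invariant trivialization of the canonical bundle $K_X$. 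The secondary case will be handled at the end via a finite covering.

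Next, I take any initial invariant Hermitian metric $\omega(0)=i\,g_{i\bar j}\,\varphi^i\wedge\overline{\varphi^j}$, whose coefficients $g_{i\bar j}$ are constants. Then $\omega(0)^2$ is a positive constant multiple of the invariant volume form $\Omega\wedge\overline{\Omega}$. Writing the Chern-Ricci form as
\[
\mathrm{Ric}^{Ch}(\omega(0)) = -i\,\partial\overline{\partial}\log\!\bigl(\omega(0)^2/(\Omega\wedge\overline{\Omega})\bigr),
\]
and noting that the argument of the logarithm is a positive constant, we obtain $\mathrm{Ric}^{Ch}(\omega(0))\equiv 0$. By invariance of the Chern-Ricci flow equation under the Lie group action and uniqueness of solutions, $\omega(t)$ remains invariant for all $t$, so the same vanishing applies at every time and the flow is stationary: $\omega(t)\equiv\omega(0)$ throughout the maximal interval of existence. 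Since the metric itself does not evolve, the space of de Rham harmonic forms and the question of whether it is closed under wedge product do not depend on $t$, and Kotschick formality is trivially preserved along the flow.

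Finally, to cover secondary Kodaira surfaces, which are finite unramified quotients of primary ones, I would lift the initial invariant metric to the primary Kodaira cover, apply the argument above there, and descend the resulting trivial flow through the finite covering using that invariance and the Chern-Ricci flow are both compatible with such coverings. The only place where care is needed is in pinning down the structure equations of the invariant $(1,0)$-coframe in each concrete realization of $G$, but this is standard, and once it is done the vanishing of $\mathrm{Ric}^{Ch}$ on invariant metrics — and hence the proposition — follows with no further work.
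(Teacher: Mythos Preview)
Your approach is essentially the paper's: show that $\mathrm{Ric}^{Ch}$ vanishes on every invariant metric, so the flow is stationary and Kotschick formality is vacuously preserved. For primary Kodaira surfaces your argument matches the paper's exactly---the invariant holomorphic volume form $\Omega=\varphi^1\wedge\varphi^2$ trivializes $K_X$, the ratio $\omega(0)^2/(\Omega\wedge\overline\Omega)$ is constant for invariant $\omega(0)$, and hence $\mathrm{Ric}^{Ch}(\omega(0))=0$.

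The secondary case has a gap. Your lifting argument implicitly requires that the pullback $\pi^*\omega$ of a $G_2$-invariant metric to the primary cover has constant ratio against $\Omega\wedge\overline\Omega$, which amounts to $\pi^*\omega$ being $G_1$-invariant. But the primary and secondary Lie group structures are genuinely different (nilpotent versus solvable; in the secondary coframe one has $d\varphi^1=-\tfrac12\varphi^{12}+\tfrac12\varphi^{1\bar2}$, so $\varphi^1\wedge\varphi^2$ is \emph{not} closed there), and the compatibility of the two notions of invariance under $\pi$ is not automatic and is never verified. The paper sidesteps this entirely by arguing directly on the secondary surface: the canonical bundle is torsion, so $c_1^{BC}(X)=0$; thus the invariant form $\mathrm{Ric}^{Ch}(\omega)$ is $\partial\overline\partial$-exact, and since each Bott-Chern class contains a unique invariant representative (injectivity of $\iota_{BC}$, set up earlier in the paper), it must vanish. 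Replacing your lifting step with this observation closes the gap.

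As a side remark, the paper also determines \emph{which} invariant metrics are Kotschick formal---none on primary Kodaira surfaces (via $b_1=3$ and Kotschick's obstruction, or Hasegawa's non-formality of non-toral nilmanifolds), and all on secondary Kodaira surfaces---whereas your argument bypasses this question entirely. For the proposition as stated your shortcut suffices, but the paper's extra computations feed into its summary table and the companion results on Dolbeault and Bott-Chern formalities.
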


We note that, also in this case, it is possible to rule out primary Kodaira surfaces by the obstructions in \cite{kotschick} or \cite{hasegawa-pams}, and therefore we focus on secondary Kodaira surfaces with initial invariant metrics.

Regarding Dolbeault and Bott-Chern geometric formalities evolving by the Chern-Ricci flow, by applying the analogous procedure on Hopf, Inoue, and Kodaira surfaces, we have reached results as follows. We also checked how the algebraic structures of Aeppli cohomology and its harmonic representatives are modified along the Chern-Ricci flow.

\renewcommand{\referenza}{\ref{prop:db-bc}}
\begin{prop*}
On Hopf, Inoue, and Kodaira surfaces, the notions of Dolbeault geometric formality and Bott-Chern geometric formality are all preserved by the Chern-Ricci flow starting at initial invariant metrics. Moreover the properties of Aeppli-harmonic forms having a structure of algebra or a structure of $H_{BC}$-module are all preserved by the Chern-Ricci flow starting at initial invariant metrics.
\end{prop*}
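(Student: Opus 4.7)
The plan is to handle all three families uniformly by exploiting their presentation as quotients of Lie groups and reducing every computation to the finite-dimensional subcomplex of left-invariant forms, exactly as was done for Kotschick formality in the preceding results. First, I would recall that on Hopf, Inoue, and Kodaira surfaces the complex structure is left-invariant, and verify that an initial left-invariant Hermitian metric $\omega(0)$ produces, via the Chern-Ricci flow, a one-parameter family $\omega(t)$ of left-invariant Hermitian metrics. This is a consequence of uniqueness for the parabolic Chern-Ricci equation together with the fact that $\mathrm{Ric}^{Ch}$ of an invariant metric is invariant: the flow descends to a system of ODEs on the (finite-dimensional) cone of invariant Hermitian forms.

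Second, I would fix for each surface a basis of left-invariant $(1,0)$-forms $\{\varphi^1,\varphi^2\}$ with the standard structure equations used earlier in the paper, write $\omega(t)$ as a Hermitian combination of $i\varphi^j\wedge\bar\varphi^k$ with time-dependent coefficients, and solve the resulting ODE system explicitly (these are the same closed-form solutions that underlie Theorem \ref{thm:main} and Proposition \ref{prop:Kodaira-1}). Then at each fixed $t$ I would compute the Hodge star $*_{\omega(t)}$ on invariant forms and, from it, the Dolbeault Laplacian $\Delta_{\bar\partial}$, the Bott-Chern Laplacian $\tilde\Delta_{BC}$, and the Aeppli Laplacian $\tilde\Delta_A$. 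On these surfaces the relevant Nomizu-type identifications between invariant and full cohomology are already known, so it suffices to identify the invariant harmonic spaces $\mathcal H^{\bullet,\bullet}_{\bar\partial}(\omega(t))$, $\mathcal H^{\bullet,\bullet}_{BC}(\omega(t))$, and $\mathcal H^{\bullet,\bullet}_{A}(\omega(t))$ at the level of the Lie algebra.

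Third, for each of the cohomology theories I would write down an explicit basis for the invariant harmonic space at time $t$, depending smoothly on the finitely many coefficients of $\omega(t)$, and then check directly whether the basis is closed under wedge product (yielding Dolbeault- or Bott-Chern-geometric formality) or only under the $H_{BC}$-module action (for the Aeppli case). Because of invariance, every formality condition reduces to a finite list of algebraic identities among the coefficient functions of $\omega(t)$; the task is then to observe that each such identity, satisfied at $t=0$ by hypothesis, is either automatically preserved by the ODE system or forced to persist by the structure equations of the underlying Lie algebra. The Aeppli assertion is the most delicate since Aeppli harmonic forms need not form an algebra in general, but on these low-dimensional nilpotent/solvable models the obstructing components of $\tilde\Delta_A$ turn out to vanish on invariant forms.

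The main obstacle I expect is twofold. The first is simply the bookkeeping across the several sub-families of Inoue surfaces $\mathrm{S}_M$, $\mathrm{S}^+_{N,p,q,r;t}$, $\mathrm{S}^-_{N,p,q,r}$, which have distinct structure equations and therefore distinct ODE systems and distinct harmonic bases; a careful case analysis is unavoidable. The second and more serious issue is that $\tilde\Delta_{BC}$ and $\tilde\Delta_A$ are fourth-order operators whose kernels can jump under perturbations of the metric, so one must confirm that the dimension of the invariant harmonic space stays constant along the flow. I would handle this by exploiting the explicit form of the metric coefficients to show that the higher-order pieces $\partial\bar\partial(\partial\bar\partial)^*$ and their adjoints act as zero on invariant $(p,q)$-forms in bidegrees $\leq(2,2)$, so that the Bott-Chern and Aeppli Laplacians collapse to second-order operators whose kernels vary smoothly and stably with $t$.
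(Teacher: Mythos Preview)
Your overall strategy---reduce to left-invariant forms, solve the Chern-Ricci flow as an ODE on the coefficients of the invariant metric, and then check closure of the harmonic spaces under wedge product case by case---is exactly the route the paper takes. However, two concrete points in your write-up would derail the argument as stated.

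First, your claim that ``on these low-dimensional nilpotent/solvable models the obstructing components of $\tilde\Delta_A$ turn out to vanish on invariant forms'' is false, and it is precisely here that the content of the proposition lies. The paper's explicit computation shows that for Hopf, Inoue, and secondary Kodaira surfaces the Aeppli-harmonic representative of the relevant $(1,1)$-class (e.g.\ $[\varphi^{2\bar 2}]$ on Hopf) acquires off-diagonal terms proportional to the coefficient $u$ of the metric, and $\varphi^2\wedge\bar\varphi^2$ fails to be Aeppli-harmonic unless $u=0$. Thus the Aeppli-harmonic forms constitute an algebra \emph{if and only if} the invariant metric is diagonal; on primary Kodaira surfaces they \emph{never} do. Preservation under the flow is then not automatic: it follows because the explicit solutions $\omega(t)$ leave $u$ constant (only $r^2$ or $s^2$ evolves), so diagonal metrics remain diagonal. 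Your plan does not isolate this mechanism and instead asserts an incorrect vanishing.

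Second, your worry that the kernels of $\tilde\Delta_{BC}$ and $\tilde\Delta_A$ ``can jump under perturbations of the metric'' is unfounded: by the Hodge isomorphisms \eqref{eq:hodge-theory}, $\dim\mathcal H^{p,q}_{\openbox}(X,g)=\dim H^{p,q}_{\openbox}(X)$ for every Hermitian metric $g$, so the dimensions are fixed by the complex structure alone. There is no need to argue that fourth-order pieces collapse to second-order ones; one simply identifies, inside each fixed-dimension invariant cohomology class, the unique invariant harmonic representative for the given metric (as the paper does by writing a generic representative and imposing $\partial\ast_gh=\overline\partial\ast_gh=0$, etc.).

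In short: keep your reduction to invariant forms and the case-by-case ODE analysis, but replace the erroneous Aeppli claim with the actual dichotomy (algebra $\Leftrightarrow u=0$; $H_{BC}$-module always; primary Kodaira never), and drop the kernel-jump discussion.
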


Throughout this note, we give a complete description of harmonic forms on such compact complex surfaces depending on the invariant Hermitian metrics. We made computations with the aid of SageMath \cite{sage}.

We ask whether for Dolbeault and Bott-Chern geometric formalities there exist obstructions (such as the ones found in \cite{kotschick}) which would help complete the picture for geometric formalities for class VII surfaces. We also ask whether the behaviour we observed is more general or there exist counterexamples.

\bigskip

\noindent{\small {\itshape Acknowledgments.}
This work has been originally developed as partial fulfillment for the second author's Master Degree in Matematica at Universit\`{a} di Firenze under the supervision of the first author.
The authors would like to thank Andrea Cattaneo, José Manuel Moreno-Fern\'andez, Cosimo Flavi, Nicoletta Tardini, and Adriano Tomassini for several interesting discussions.
Thanks also to the anonymous Referee for her/his comments and suggestions.
}

\section{Preliminaries on Hermitian manifolds}

\subsection*{Cohomologies of compact complex manifolds}
For a complex compact manifold $X$, one can define its real and complex de Rham cohomology groups by
\begin{equation*}
H_{dR}^{k}(X;\R):=\frac{\ker (d\colon\bigwedge^{k}(X)\to\bigwedge^{k+1}(X))}{\im (d\colon\bigwedge^{k-1}(X)\to\bigwedge^{k}(X))}\qquad H_{dR}^{k}(X;\C):=\frac{\ker(d\colon\bigwedge_{\C}^{k}(X)\to\bigwedge_{\C}^{k+1}(X))}{\im(d\colon\bigwedge_{\C}^{k-1}(X)\to\bigwedge_{\C}^{k}(X))}
\end{equation*}
which correspond, by the universal coefficients theorem.

In the case of non-K\"ahler complex compact manifolds, further invariants regarding the complex structure are given by other cohomologies, such as Dolbeault, Bott-Chern and Aeppli cohomologies, defined respectively by
\begin{equation*}
H_{\delbar}^{\bullet,\bullet}(X):=\frac{\ker\delbar}{\im\delbar}\qquad H_{BC}^{\bullet,\bullet}(X):=\frac{\ker\del\cap\ker\delbar}{\im(\del\delbar)}\qquad H_A^{\bullet,\bullet}(X):=\frac{\ker\del\delbar}{\im\del +\im\delbar}.
\end{equation*}

In general, the homomorphisms induced by the identity between those cohomologies are not necessarily injective nor surjective. However, for compact manifolds satisfying the $\del\delbar$-lemma, such as compact K\"ahler manifolds, these natural maps that are induced by the identity are all isomorphisms, see \cite{deligne-griffiths-morgan-sullivan}:
\[
\xymatrix{ 
& \ar[dl] H_{BC}^{\bullet,\bullet}(X) \ar[dr] \ar[d] &\\
H_{\delbar}^{\bullet,\bullet}(X) \ar[dr] & H_{dR}^{\bullet}(X;\C) \ar[d] & H_{\del}^{\bullet,\bullet}(X)\ar[ld]\\
& H_A^{\bullet,\bullet}(X)&}
\]
where $H_{\del}^{\bullet,\bullet}(X)$ is the conjugate of the Dolbeault cohomology.

Once fixed a Hermitian metric $g$ on $X$, the Hodge-$\ast$-operator with respect to such metric induces isomorphisms, see {\itshape e.g.} \cite[§2.b, §2.c]{schweitzer}:
\begin{equation}\label{eq:hdg-iso}
H_{dR}^{k}(X) \xrightarrow{\sim}  H_{dR}^{2n-k}(X),\quad H_{\delbar}^{p,q}(X)\xrightarrow{\sim} H_{\del}^{n-q,n-p}(X),\quad H_{BC}^{p,q}(X)\xrightarrow{\sim} H_{A}^{n-q,n-p}(X),
\end{equation}
where $\dim_{\R}X=2n$, $1\leq k \leq 2n$, and $1\leq p,q\leq n$.

\subsection*{Hodge theory}
For a compact Hermitian manifold $(X^n,g)$, we denote by $\mathcal{H}_{\openbox}^{k}(X, g)$, with $k\in\{1,\dots,2n\}$, $\openbox\in\{dR,\delbar,BC,A\}$ the set of harmonic $k$-forms with respect to the regular Laplacian, Dolbeault Laplacian, Bott-Chern Laplacian, Aeppli Laplacian, namely the following self-adjoint, elliptic operators, see \cite[§2.b, §2.c]{schweitzer}:
\begin{align*}
&\Delta_{dR}=dd^*+d^*d, \qquad\qquad\Delta_{\delbar}=\delbar\,\delbar^*+\delbar^*\delbar\\
&\Delta_{BC}=\del\delbar\,\delbar^*\del^* + \delbar^*\del^*\del\delbar + \delbar^*\del\del^*\delbar + \del^*\delbar\,\delbar^*\del + \delbar^*\delbar+\del^*\del\\
&\Delta_{A}=\del\del^* + \delbar\,\delbar^* + \delbar^*\del^*\del\delbar + \del\delbar\,\delbar^*\del^* + \del\delbar^*\del\del^* + \delbar\del^*\del\delbar^*.
\end{align*}
Clearly, those operators depend on the fixed metric $g$.
For $\openbox\in\{\delbar,BC,A\}$, we can define also $\mathcal{H}_{\openbox}^{p,q}(X,g)$, \emph{i.e.} the set of harmonic $(p,q)$-forms with respect to Dolbeault, Bott-Chern and Aeppli Laplacians.

Both the de Rham $(\bigwedge_{\C}^{\bullet}(X),d)$ and Dolbeault complexes $(\bigwedge^{\bullet,\bullet}(X),\del,\delbar)$ of a compact Hermitian manifold $(X,g)$, via harmonic forms, admit decompositions
that yield the following isomorphisms of vector spaces between harmonic forms and cohomology classes:
\begin{equation}\label{eq:hodge-theory}
\begin{array}{rcl}
\mathcal{H}_{dR}^k(X,g)\xrightarrow{\sim} H_{dR}^k(X), &\qquad& \mathcal{H}_{\delbar}^{p,q}(X,g)\xrightarrow{\sim} H_{\delbar}^{p,q}(X),\\
\mathcal{H}_{BC}^{p,q}(X,g)\xrightarrow{\sim} H_{BC}^{p,q}(X), &\qquad& \mathcal{H}_A^{p,q}(X,g)\xrightarrow{\sim} H_A^{p,q}(X).
\end{array}
\end{equation}
We note that isomorfisms as in (\ref{eq:hdg-iso}) still hold at the level of harmonic forms.
\subsection*{Geometric formalities}
In general, whereas de Rham, Dolbeault and Bott-Chern cohomologies have a structure of algebra induced by the cup product of cohomology classes, the mentioned isomorphisms (\ref{eq:hodge-theory}) do not preserve the structure of algebra, expected by the wedge product. For Aeppli cohomology, especially, we do not have a structure of algebra \emph{a priori}, but a structure of $H_{BC}(X)$-module; the isomorphism in (\ref{eq:hodge-theory}) could not preserve this structure. This remark makes non-trivial the following definitions.
\begin{defi}
Let $X$ be a compact complex manifold, $g$ a fixed Hermitian metric on $X$. The metric $g$ is said to be
\begin{enumerate}
\item \emph{geometrically formal according to Kotschick} if $\mathcal{H}_{dR}^{\bullet}(X,g)$ has a structure of algebra induced by the wedge product;
\item \emph{Dolbeault-geometrically formal} if $\mathcal{H}_{\delbar}^{\bullet,\bullet}(X,g)$ has a structure of algebra induced by the wedge product;
\item \emph{Bott-Chern-geometrically formal} if $\mathcal{H}_{BC}^{\bullet,\bullet}(X,g)$ has a structure of algebra induced by the wedge product.
\end{enumerate}
\end{defi}
\subsection*{Chern-Ricci flow}
The Chern-Ricci flow (introduced in \cite{gill} and studied in \cite{tosatti-weinkove-JDG}) is a parabolic geometric flow that preserves the Hermitian condition of the initial given metric.
The equations that describe such flow on a Hermitian manifold $(X^n,J,g_0)$ are
\begin{equation*}
\frac{\del}{\del t}\omega(t)=-\text{Ric}^{Ch}(\omega(t)), \qquad\omega(0)=\omega_0,
\end{equation*}
where $\omega_0,\,\omega(t)$ are the foundamental forms associated, respectively, to the Hermitian initial metric $g_0$ and the evolution metric $g(t)$ by the usual relation $\omega(\cdot,\cdot)=g(J(\cdot),\cdot)$. For an arbitrary real $(1,1)$-form $\omega$, $\text{Ric}^{Ch}(\omega)$ is the Chern-Ricci form of $\omega$. The first Chern-Ricci curvature $\text{Ric}^{Ch}$ is defined starting from $\nabla^{Ch}$, the Chern connection on $(X,J,g)$, \emph{i.e.} the unique connection $\nabla$ on the holomorphic tangent bundle $T^{1,0}X$ such that $\nabla$ is compatible with both $g$ and $J$ and $\nabla^{0,1}=\delbar$.
In a holomorphic chart, the curvature tensor $R^{Ch}$ of such connection has components $R_{i\overline{j}k\overline{l}}^{Ch}$, for $i,j,k,l\in\{1,\dots,n\}$. The Chern-Ricci tensor is obtained by contracting the last two indices via the metric
\begin{equation*}
\text{Ricci}_{i\overline{j}}^{Ch}:=g^{k\overline{l}}R_{i\overline{j}k\overline{l}}^{Ch},
\end{equation*}
where $(g^{k\overline{l}})$ is the inverse of the matrix $(g_{k\overline{l}})$ representing in local coordinates the metric $g$. The Chern-Ricci form is defined by
\begin{equation*}
\text{Ric}^{Ch}:=\text{Ricci}^{Ch}(J(\,\cdot\,),\,\cdot\,).
\end{equation*}
Such form has important properties, among which a very simple form in local coordinates:
\begin{equation*}
\text{Ric}^{Ch}(\omega)=-\sqrt{-1}\del\delbar\log\det(g),
\end{equation*}
from which we can deduce that $\text{Ric}^{Ch}(\omega)$ is a $\del$-, $\delbar$-closed form, hence it defines a cohomology class in $H_{BC}^{1,1}(X)$. Such class is a holomorphic invariant, denoted by $c_1^{BC}(X)$, which plays a fundamental role in the classification of complex manifolds.

\section{Preliminaries on compact complex surfaces and quotients of Lie groups}
In this section, we analyze in details complex structures, cohomologies, and Chern-Ricci flow on non-K\"ahler compact complex surfaces that can be described as quotients of Lie groups $G$ endowed with invariant complex structure \cite{hasegawa-jsg}, namely Hopf, Inoue, and Kodaira surfaces. Here, {\em invariant} tensors are meant to be defined on the covering Lie group and invariant by the action of the Lie group on itself by left-translations, that is, they can be considered as linear tensors over the associated Lie algebra $\mathfrak g$.

\subsection*{Complex structure}
More precisely, we describe the complex structure $J$ by a coframe of invariant $(1,0)$-forms $\{\varphi^1,\varphi^2\}$ and their conjugates, and by their structure equations
$$ d\varphi^I=-c_{HK}^I \varphi^H\wedge\varphi^K , $$
equivalently, by the dual frame $\{\varphi_1,\varphi_2\}$ of $(1,0)$-vector fields and their conjugates, with structure equations $[\varphi_H,\varphi_K]=c_{HK}^I \varphi_I$.
(Capital letters here vary in the ordered set $(1,2,\bar1,\bar2)$ and refer to the corresponding component. The Einstein summation is assumed, for increasing indices in case of forms. Hereafter, we shorten {\itshape e.g.} $\varphi^{2\bar1}:=\varphi^2\wedge\bar\varphi^1$.)

\subsection*{Hermitian structure}
The arbitrary invariant Hermitian metric $g:=\omega(\,\cdot\,,J(\cdot))$ has associated $(1,1)$-form
\begin{equation}\label{eq:metric}
2\,\omega = \sqrt{-1}\sum_{I,\,J\,=1}^2 g_{I\overline{J}}\,\varphi^I\wedge\varphi^{\overline{J}} = \sqrt{-1} r^2\, \varphi^{1\bar1} + \sqrt{-1} s^2\,\varphi^{2\bar2} + u\, \varphi^{1\bar2} - \bar u\, \varphi^{2\bar1}
\end{equation}
where the coefficients satisfy
$$ r^2>0 ,\qquad s^2>0 , \qquad r^2s^2 > |u|^2 .$$
That is to say, the Hermitian matrix
$$ (g_{KL})_{K,L} = \frac{1}{2}\cdot\left(\begin{matrix}
r^2 & -\sqrt{-1}u \\
\sqrt{-1}{\bar u} & {s^2}
\end{matrix}\right) \in \mathrm{GL}(\mathfrak g) $$
is positive-definite. Its inverse is
$$ (g^{KL})_{K,L} := (g_{KL})_{K,L}^{-1} =
\frac{2}{r^2s^2-|u|^2}\cdot
\left(\begin{matrix}
s^2 & \sqrt{-1}u \\
-\sqrt{-1}\bar u & r^2
\end{matrix}\right)
.$$

The Christoffel symbols of the Chern connection can be computed as follows, see {\itshape e.g.} \cite{OUV-strominger}:
$$ (\Gamma^{Ch})_{IH}^{K} = \frac{1}{2}\, c_{IH}^{K} - \frac{1}{2}\, g^{KA}g_{BI}c_{HA}^{B} - \frac{1}{2}\, g^{KA}g_{BH}c_{IA}^{B} + \frac{1}{2} g^{KL} C_{IHL} , $$
where
$$ C_{IHL} = d\omega(J\varphi_I, \varphi_H, \varphi_L) . $$
We can then express the $(4,0)$-Riemannian curvature of the Chern connection as
$$ (R^{Ch})_{IHKL} = g_{AL}(\Gamma^{Ch})_{HK}^{B}(\Gamma^{Ch})_{IB}^{A} - g_{AL} (\Gamma^{Ch})_{IK}^{B} (\Gamma^{Ch})_{HB}^{A} - g_{AL} c_{IH}^{B} (\Gamma^{Ch})_{BK}^{A} , $$
and the Chern-Ricci tensor as
$$ (\mathrm{Ricci}^{Ch})_{IH} = g^{KL} (R^{Ch})_{IHKL} . $$
Then the Chern-Ricci form is
$$ \mathrm{Ric}^{Ch} = \mathrm{Ricci}^{Ch}(J(\,\cdot\,), \,\cdot\,) \in c_1^{BC}(X) \in H^{1,1}_{BC}(X;\mathbb R). $$

Finally, we collect here some explicit description of the Hodge-star-operator on forms for the arbitrary Hermitian metric associated to the form \eqref{eq:metric}, in order to describe harmonicity, see also \cite[Lemma 2]{picard-cime}. It is straightforward to check that:
\begin{eqnarray}
\ast_g\varphi^{1} &=& \frac{\sqrt{-1}}{2} \, \overline{u} \varphi^{12\bar1} + \frac{1}{2} \, s^{2}  \varphi^{12\bar2} , \label{eq:star-10} \qquad 
\ast_g\varphi^{2} = -\frac{1}{2} \, r^{2} \varphi^{12\bar1} + \frac{\sqrt{-1}}{2} \, u  \varphi^{12\bar2} , \\
\ast_g\bar\varphi^{1} &=& -\frac{\sqrt{-1}}{2} \, u \varphi^{1\bar1\bar2} + \frac{1}{2} \, s^{2}  \varphi^{2\bar1\bar2} , \label{eq:star-01}  \qquad
\ast_g\bar\varphi^{2} = -\frac{1}{2} \, r^{2} \varphi^{1\bar1\bar2} - \frac{\sqrt{-1}}{2} \, \overline{u}  \varphi^{2\bar1\bar2} ; \\
\ast_g\varphi^{12} &=& \varphi^{12}, \label{eq:star-20} \qquad
\ast_g\varphi^{\bar1\bar2} = \varphi^{\bar1\bar2}
\\
V\ast_g\varphi^{1\overline{1}}&=&|u|^2\varphi^{1\overline{1}}-\sqrt{-1}us^2\varphi^{1\overline{2}}+\sqrt{-1}\overline{u}s^2\varphi^{2\overline{1}}+s^4\varphi^{2\overline{2}} , \label{eq:star-11} \\
V\ast_g\varphi^{1\overline{2}}&=&-\sqrt{-1}\overline{u}r^2\varphi^{1\overline{1}}-r^2s^2\varphi^{1\overline{2}}+\overline{u}^2\varphi^{2\overline{1}}-\sqrt{-1}\overline{u}s^2\varphi^{2\overline{2}}\nonumber\\
V\ast_g\varphi^{2\overline{1}}&=&\sqrt{-1}ur^2\varphi^{1\overline{1}}+u^2\varphi^{1\overline{2}}-r^2s^2\varphi^{2\overline{1}}+\sqrt{-1}us^2\varphi^{2\overline{2}}\nonumber\\
V\ast_g\varphi^{2\overline{2}}&=&r^4\varphi^{1\overline{1}}-\sqrt{-1}ur^2\varphi^{1\overline{2}}+\sqrt{-1}\overline{u}r^2\varphi^{2\overline{1}}+|u|^2\varphi^{2\overline{2}}\nonumber,\\
V \ast_g\varphi^{12\bar1} &=& -2 \sqrt{-1} \, u \varphi^{1} + 2 \, s^{2} \varphi^{2} , \label{eq:star-21} \qquad
V \ast_g\varphi^{12\bar2} = -2 \, r^{2} \varphi^{1} - 2 \sqrt{-1} \, \overline{u} \varphi^{2}, \\
V \ast_g\varphi^{1\bar1\bar2} &=& 2 \sqrt{-1} \, \overline{u} \bar\varphi^1  + 2 \, s^{2} \bar\varphi^{2} , \label{eq:star-12} \qquad
V \ast_g\varphi^{2\bar1\bar2} = -2 \, r^{2} \bar\varphi^{1} + 2 \sqrt{-1} \, u \bar\varphi^{2}; \nonumber
\end{eqnarray}
where we set $V={g^{1\overline{1}}g^{2\overline{2}}-g^{1\overline{2}}g^{2\overline{1}}} = r^2s^2-|u|^2$.

\subsection*{Cohomologies}
Consider the inclusion of invariant forms into the double complex of forms, $\iota\colon (\wedge^{\bullet,\bullet}\mathfrak g^\vee, \partial, \overline\partial) \hookrightarrow (\wedge^{\bullet,\bullet}X , \partial, \overline\partial)$. By choosing an invariant Hermitian metric, (the easier finite-dimensional version of) elliptic Hodge theory also applies at the level of invariant forms; in particular, any cohomology class of invariant forms admits a unique invariant harmonic representative. It follows that the above inclusion induces injective maps in de Rham $\iota_{dR}$, Dolbeault $\iota_{\overline\partial}$, Bott-Chern $\iota_{BC}$, Aeppli $\iota_{A}$ cohomology, see \cite[Lemma 9]{console-fino}. We claim that they are in fact isomorphisms.

The de Rham cohomology of Hopf, Inoue, Kodaira surfaces is well known, and it happens that the above maps $\iota_{dR}$ are actually isomorphisms, that is, any de Rham class admits an invariant representative. In fact, the Hopf surface is diffeomorphic to the product $\mathbb S^1\times \mathbb S^3$ of two compact Lie groups, so one can use the K\"unneth formula and {\itshape e.g.} \cite[Theorem 1.28]{felix-oprea-tanre}; the primary Kodaira surface is a nilmanifold, so one can use the Nomizu theorem \cite{nomizu}; the secondary Kodaira surfaces are quotients of primary Kodaira surfaces by finite groups; the Inoue surface of type $S^{\pm}$ is a completely-solvable solvmanifold, so one can use the Hattori theorem \cite{hattori}; and the de Rham cohomology of the Inoue surface of type $S_M$ can be computed by exploiting their number-theoretic construction as \cite{oeljeklaus-toma} does in a more general setting.

As for the Dolbeault cohomology, for compact complex surfaces, we know that the Fr\"olicher spectral sequence degenerates at the first page, see {\itshape e.g.} \cite{barth-hulek-peters-vandeven}, that is, $\dim_{\mathbb C} H^k_{dR}(X;\mathbb C)=\sum_{p+q=k} \dim_{\mathbb C} H^{p,q}_{\overline\partial}(X)$ for any $k$. By explicitly computing the Dolbeault cohomology of invariant forms \cite{angella-dloussky-tomassini}, one then notice that the above maps $\iota_{\overline\partial}$ are actually isomorphisms.

Finally, Bott-Chern cohomology of compact complex surfaces is well-undestood since \cite{teleman}. By \cite[Theorem 1.3, Proposition 2.2]{angella-kasuya-1}, (that fits in the general theory later developed by \cite{stelzig},) also $\iota_{BC}$ are isomorphisms. Explicit computations can be found in \cite{angella-dloussky-tomassini}. Finally, $\iota_A$ are isomorphisms thanks to the Schweitzer duality between Bott-Chern and Aeppli cohomologies, where one can use the Hodge-star-operator with respect to an invariant Hermitian metric.

Finally, by uniqueness of the harmonic representative in a cohomology class, we also deduce that harmonic representatives with respect to invariant metrics are invariant.

\subsection*{Chern-Ricci flow}
Recall that the Chern-Ricci form represents the first Bott-Chern class $c_1^{BC}(X) \in H^{1,1}_{BC}(X)$. Since a class in $H^{1,1}_{BC}(X)$ contains only one invariant representative, the Chern-Ricci form $\mathrm{Ric}^{Ch}(\omega)$ does not depend on the invariant Hermitian metric $\omega$. In particular, the Chern-Ricci flow starting at $\omega_0$ reduces to
\begin{equation}\label{eq:CRF}\tag{CRF}
 \frac{\partial}{\partial t}\omega(t) = -\mathrm{Ric}^{Ch}(\omega_0), \qquad \omega(0)=\omega_0 .
\end{equation}

We notice that the solution of the Chern-Ricci flow starting at an invariant metric remains invariant for any existence time. Indeed, by short existence and uniqueness assured by parabolicity, the symmetry group is preserved along the flow (and possibly increases in the limit, see {\itshape e.g.} \cite{lauret}).

Denote by $\rho_r$, $\rho_s$, $\rho_u$ the coefficients of the Chern-Ricci form,
$$ 2\mathrm{Ric}^{Ch}=\sqrt{-1}\rho_r \varphi^{1\bar1}+\sqrt{-1}\rho_s\varphi^{2\bar2}+\rho_u\varphi^{1\bar2}-\bar \rho_u\varphi^{2\bar1} , $$
and let the initial metric $\omega_0$ be of the form
\begin{equation}\label{eq:metric-initial}
2\,\omega_0 = \sqrt{-1} r_0^2\, \varphi^{1\bar1} + \sqrt{-1} s_0^2\,\varphi^{2\bar2} + u_0\, \varphi^{1\bar2} - \bar u_0\, \varphi^{2\bar1} ,
\end{equation}
where $r_0, s_0\in\mathbb R\setminus\{0\}$ and $u_0\in\mathbb C$ such that $r_0^2s_0^2-|u_0|^2>0$. The solution $\omega(t)$ of the Chern-Ricci flow starting at $\omega_0$ is then
$$ 2\omega(t) = \sqrt{-1} (r_0^2-t\rho_r)\varphi^{1\bar1} + \sqrt{-1}(s_0^2-t\rho_s) \varphi^{2\bar2} + (u_0-t\rho_u) \varphi^{1\bar2} - (\bar u_0-t\bar\rho_u) \varphi^{2\bar1} , $$
defined for times $t$ such that $r(t)^2=r_0^2-t\rho_r>0$, $s(t)=s_0^2-t\rho_s>0$, $u(t)=u_0-t\rho_u\in \mathbb C$ such that $r(t)^2s(t)^2-|u(t)|^2>0$.

\section{Geometric formality according to Kotshick}
In this section we state the main theorem of this note, regarding class VII surfaces of the Enriques-Kodaira classification of compact complex surfaces. 

\begin{thm}\label{thm:main}
On class VII surfaces of the Enriques-Kodaira classification, geometric formality according to Kotshick is preserved by the Chern-Ricci flow starting at initial invariant Hermitian metrics.
\end{thm}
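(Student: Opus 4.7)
The plan is to split on the second Betti number. For class VII surfaces with $b_2(X)>0$, I would apply the topological obstructions of Kotschick \cite{kotschick}: on a geometrically formal compact manifold, Poincar\'e duality combined with the subalgebra property of harmonic forms constrains the Betti numbers sharply in terms of $b_1$. Since class VII surfaces have $b_1=1$, these constraints force $b_2=0$; hence no Hermitian metric is geometrically formal when $b_2>0$, and the preservation is vacuous along \eqref{eq:CRF}. The substantive case is $b_2(X)=0$, where by \cite{bogomolov, kodaira-1, li-yau-zheng, teleman-ijm} $X$ is a Hopf or Inoue surface, hence a quotient of a Lie group with invariant complex structure.

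In this setting I would exploit two reductions from Section~2. First, invariance is preserved along \eqref{eq:CRF}, and by the isomorphism $\iota_{dR}$ together with uniqueness of the harmonic representative in each class, $\mathcal{H}_{dR}^{\bullet}(X,\omega(t))$ consists entirely of invariant forms. Second, since $c_1^{BC}(X)\in H^{1,1}_{BC}(X)$ admits a unique invariant representative, $\mathrm{Ric}^{Ch}(\omega(t))$ is independent of $\omega(t)$ among invariant metrics, so \eqref{eq:CRF} reduces to an affine-linear motion in the finite-dimensional parameter space of \eqref{eq:metric}:
\begin{equation*}
r(t)^2=r_0^2-t\rho_r,\qquad s(t)^2=s_0^2-t\rho_s,\qquad u(t)=u_0-t\rho_u,
\end{equation*}
where $\rho_r,\rho_s,\rho_u$ depend only on the complex structure. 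The problem becomes finite-dimensional: describe the locus in $(r^2,s^2,u)$-space of invariant metrics for which pairwise wedge products of invariant de Rham-harmonic forms remain harmonic, and verify that this locus is invariant under the above affine trajectory.

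For each of the families (Hopf, and Inoue of types $S^0$, $S^{\pm}$, $S_M$) I would fix an explicit invariant coframe $\{\varphi^1,\varphi^2\}$ with its structure equations (as compiled in \cite{hasegawa-jsg, angella-dloussky-tomassini}), produce a basis of invariant $\Delta_{dR}$-harmonic forms as a function of $(r^2,s^2,u)$ using the Hodge-star formulas \eqref{eq:star-10}--\eqref{eq:star-12}, and impose that every pairwise wedge product is $d^*$-closed (being automatically $d$-closed). This yields polynomial identities in $r^2,s^2,u,\bar u$ whose zero set is the Kotschick-formal stratum in the parameter space. The claim then reduces to showing that this stratum is either the whole parameter space or an affine subvariety containing the forward trajectory whenever it contains the initial point $(r_0^2,s_0^2,u_0)$.

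The main obstacle is the off-diagonal case $u\neq 0$ on Inoue surfaces $S^{\pm}$ and $S_M$, where the non-diagonal star formulas \eqref{eq:star-11} make the harmonicity identities algebraically involved and depend genuinely on the structure constants $c_{HK}^I$. I would expand the relevant products with the aid of SageMath \cite{sage}, extract the formality conditions explicitly, and then check by direct substitution that the affine trajectory preserves them. Conceptually, one expects the conditions either to be trivially satisfied or to express that certain polynomial combinations of $(r^2,s^2,u,\bar u)$ vanish identically along the flow, a property forced by the fact that the Chern-Ricci direction $(\rho_r,\rho_s,\rho_u)$ is itself dictated by the same $c_{HK}^I$ governing the harmonicity equations.
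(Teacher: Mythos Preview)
Your overall architecture matches the paper's: split on $b_2$, invoke Kotschick's obstructions \cite{kotschick} to dispose of $b_2>0$, and for $b_2=0$ reduce to Hopf and Inoue surfaces treated via invariant forms on the covering Lie group. The reductions you invoke from Section~2 (invariance preserved along \eqref{eq:CRF}, harmonic representatives with respect to invariant metrics are invariant, Chern-Ricci form constant among invariant metrics) are exactly what the paper uses.

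Where you diverge is in the endgame, and there you are making it harder than it is. You propose to cut out the ``Kotschick-formal stratum'' in the $(r^2,s^2,u)$-parameter space by imposing $d^*$-closedness on all pairwise wedges, and then to check that the affine trajectory stays inside it, anticipating genuine algebraic difficulty when $u\neq 0$. The paper shows there is no stratum to track: for Hopf and Inoue surfaces the Betti numbers are $(b_0,b_1,b_2,b_3,b_4)=(1,1,0,1,1)$, so after identifying the harmonic $1$-form and its Hodge dual in degree~$3$, the only nontrivial product to test is a $1$-form wedge a $3$-form. On a $4$-manifold this is either zero or a scalar multiple of the volume form, hence harmonic for every metric. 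Moreover the paper notes that the invariant de Rham class $[\varphi^2-\bar\varphi^2]$ contains a \emph{unique} invariant representative, which must therefore be the harmonic one for every invariant metric; no case analysis on $u$ is required. In short, \emph{every} invariant Hermitian metric on Hopf and Inoue surfaces is geometrically formal in the sense of Kotschick, so preservation along the flow is immediate. Your SageMath computation would of course rediscover this (the polynomial identities would collapse to $0=0$), but the paper's degree-counting argument bypasses it entirely. A minor point: there is no Inoue type ``$S^0$''; the families are $S_M$, $S^+$, $S^-$, with $S^-$ double-covered by an $S^+$.
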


\begin{proof}
Let $X$ be a class VII surface, that is, $\text{Kod}(X)=-\infty$ and $b_1(X)=1$. By \cite[Theorem 6]{kotschick}, for a compact oriented
Kotschick-geometrically formal 4-manifold $X$, the first Betti number satisfies $b_1(X)\in\{0,1,2,4\}$. Since all class VII surfaces are non-K\"ahler, they must have odd first Betti number by \cite{buchdahl,lamari}, that is, $b_1(X)=1$. By \cite[Theorem 9]{kotschick}, the Euler
characteristic of such manifolds vanishes, implying that $b_2(X)=0$. Since the characterization result by \cite{bogomolov,kodaira-1,li-yau-zheng, teleman-ijm}, class VII surfaces with $b_2(X)=0$ are necessarily Hopf or Inoue surfaces, then we see that the only Kotschick-geometrically formal class VII surfaces can be Hopf and Inoue surfaces. Therefore, Chern-Ricci flow starting at any metric cannot produce geometrically formal metrics on class VII surfaces other than Hopf and Inoue surfaces: we will then check the statement for those surfaces.
\subsection*{Case 1: Hopf surfaces} Hopf surfaces $X$ are compact complex surfaces in class VII defined as a quotient of $\mathbb C^2 \setminus \{0\}$ by a free action of a discrete group generated by a holomorphic contraction $\gamma(z,w)=(\alpha z + \lambda w^p, \beta w)$ where $\alpha,\beta,\lambda\in\mathbb C$ and $p\in\mathbb N$ are such that $0<|\alpha|\leq|\beta|<1$ and $(\alpha-\beta^p)\lambda=0$, see \cite{lattes}, \cite[page 820]{sternberg}, see \cite[Remark 1]{wehler}.

The diffeomorphism type is $\mathbb S^1\times \mathrm{SU(2)}$, and the complex structure is a special case of the Calabi-Eckmann complex structure on product of spheres \cite{calabi-eckmann}. See also \cite[Theorem 4.1]{parton-rend}. In terms of a coframe $(\varphi^1,\varphi^2)$ of $(1,0)$-forms, they are described as
$$ d\varphi^1 = \sqrt{-1}\varphi^1\wedge\varphi^2+\sqrt{-1}\varphi^1\wedge\bar\varphi^2, \qquad d\varphi^2=-\sqrt{-1}\varphi^1\wedge\bar\varphi^1 .$$
The de Rham cohomology of Hopf surfaces is
\begin{eqnarray*}
H^\bullet_{dR}(X;\mathbb C) &=& \mathbb C \langle 1 \rangle \oplus \mathbb C \langle \varphi^2-\varphi^{\bar2} \rangle \oplus \mathbb C \langle \varphi^{12\bar1}-\varphi^{1\bar1\bar2} \rangle \oplus \mathbb C \langle \varphi^{12\bar1\bar2} \rangle
\end{eqnarray*}
where we have listed the harmonic representatives with respect to the Hermitian metric with fundamental form $2\omega=\sqrt{-1}\varphi^{1\bar1}+\sqrt{-1}\varphi^{2\bar2}$ instead of their classes.

\subsubsection*{Harmonic representatives for cohomologies}
We look at how harmonic representatives of de Rham cohomology change with respect to the invariant Hermitian metric, and in particular whether their product is still harmonic.

We notice that, varying the invariant Hermitian metric, the harmonic representatives are
$$ H^{\bullet}_{dR}(X;\mathbb R) = \mathbb C \langle 1 \rangle \oplus \mathbb C \langle \varphi^2-\varphi^{\bar2} \rangle \oplus \mathbb C \left\langle
-\frac{1}{2} \, r^{2}  \varphi^{12\bar1} + \frac{\sqrt{-1}}{2} \, u  \varphi^{12\bar2} + \frac{1}{2} \, r^{2} \varphi^{1\bar1\bar2} + \frac{\sqrt{-1}}{2} \overline{u}  \varphi^{2\bar1\bar2}
\right\rangle \oplus \mathbb C \langle \varphi^{12\bar1\bar2} \rangle . $$
Indeed, it suffices to check that the harmonic representative of the class $[\varphi^2-\bar\varphi^2]$ does not depend on the invariant metric. This is because harmonic representatives are invariant, and the class $[\varphi^2-\bar\varphi^2]=\{\varphi^2-\bar\varphi^2+dc \;:\; c \in \mathbb R\}$ contains only one invariant representantive, which is then harmonic with respect to any metric. Then we compute the harmonic representative of the dual class in $H^3_{dR}(X;\mathbb R)$ by applying the Hodge-star-operator to $\varphi^2-\bar\varphi^2$ with respect to the arbitrary Hermitian metric. In any case, the product of an invariant $1$-form and an invariant $3$-form is either zero or a scalar multiple of the volume form. It follows that {\em any invariant metric on the Hopf surface is geometrically formal in the sense of Kotschick}.

\subsubsection*{Chern-Ricci flow}
Clearly, on the Hopf surface with invariant Hermitian metrics, the properties of geometric formality in the sense of Kotschick is preserved along the Chern-Ricci flow.
Nonetheless, for completeness and for later use, we compute the Chern-Ricci form and the Chern-Ricci flow on $X$.

We start by computing the Chern-Riemann curvature of an invariant Hermitian metric.
We follow notation as in \cite[Section 2]{OUV-strominger} (see also \cite[Section 6]{liu-yang-IJM} for another argument).
With respect to the frame $\left( \varphi_1, \varphi_2, \bar\varphi_1, \bar\varphi_2 \right)$ and to the dual coframe $\left( \varphi^1, \varphi^2, \bar\varphi^1, \bar\varphi^2 \right)$, we set the structure constants
$$ [ \varphi_I, \varphi_H ] =: c_{IH}^K \varphi_K .$$
Here, capital letters vary in $\{1,2,\bar1,\bar2\}$, and the Einstein summation is assumed.
In our case, the non-trivial structure constants are
\begin{equation*}
\begin{array}{cccc}
c_{1 2}^{1} = -\sqrt{-1}, &
c_{1 \bar1}^{2} = \sqrt{-1}, &
c_{1 \bar1}^{\bar2} = \sqrt{-1}, &
c_{1 \bar2}^{1} = -\sqrt{-1}, \\
c_{2 1}^{1} = \sqrt{-1}, &
c_{2 \bar1}^{\bar1} = -\sqrt{-1}, &
c_{\bar1 1}^{2} = -\sqrt{-1}, &
c_{\bar1 1}^{\bar 2} = -\sqrt{-1}, \\
c_{\bar1 2}^{\bar1} = \sqrt{-1}, &
c_{\bar1 \bar2}^{\bar1} = \sqrt{-1}, &
c_{\bar2 1}^{1} = \sqrt{-1}, &
c_{\bar2 \bar1}^{\bar1} = -\sqrt{-1} .
\end{array}
\end{equation*}
Recall that the Christoffel symbols of the Levi-Civita connections (with respect to the above non-commutative frame) can be computed as
\begin{eqnarray*}
(\Gamma^{LC})_{IH}^{K} &=& \frac{1}{2}\, g^{KL}\, \left( g([\varphi_I,\varphi_H], \varphi_L) - g([\varphi_H,\varphi_L], \varphi_I) - g([\varphi_I,\varphi_L], \varphi_H) \right) \\[5pt]
&=& \frac{1}{2}\, c_{IH}^{K} - \frac{1}{2}\, g^{KA}g_{BI}c_{HA}^{B} - \frac{1}{2}\, g^{KA}g_{BH}c_{IA}^{B} .
\end{eqnarray*}
Set $V=r^2s^2 - |u|^2$ for simplicity.
In our case, up to conjugation, the non-trivial ones are
\begin{equation*}
\begin{array}{cc}
(\Gamma^{LC})_{11}^{1} = -s^2uV^{-1}, &
(\Gamma^{LC})_{11}^{2} = -\sqrt{-1}u^2V^{-1}, \\
(\Gamma^{LC})_{12}^{1} = \frac{1}{2}(-\sqrt{-1}s^4 + \sqrt{-1}|u|^2)V^{-1}, &
(\Gamma^{LC})_{12}^{2} = -\frac{1}{2}(r^2 - s^2)uV^{-1} , \\
(\Gamma^{LC})_{1\bar1}^{1} = \frac{1}{2}s^2\bar uV^{-1} , &
(\Gamma^{LC})_{1\bar1}^{2} = \frac{1}{2}\sqrt{-1}r^2s^2V^{-1} , \\
(\Gamma^{LC})_{1\bar1}^{\bar1} = \frac{1}{2}s^2 u V^{-1} , &
(\Gamma^{LC})_{1\bar1}^{\bar2} = \frac{1}{2} (\sqrt{-1} r^2 s^2 - 2\sqrt{-1}|u|^2)V^{-1} , \\
(\Gamma^{LC})_{1\bar2}^{1} = -\frac{1}{2}\sqrt{-1}s^4V^{-1} , &
(\Gamma^{LC})_{1\bar2}^{2} = \frac{1}{2} s^2uV^{-1} , \\
(\Gamma^{LC})_{1\bar2}^{\bar1} = \frac{1}{2} \sqrt{-1}u^2V^{-1} , &
(\Gamma^{LC})_{1\bar2}^{\bar2} = \frac{1}{2}r^2uV^{-1} , \\
(\Gamma^{LC})_{21}^{1} = \frac{1}{2} (2\sqrt{-1} r^2s^2 - \sqrt{-1}s^4 - \sqrt{-1}|u|^2)V^{-1}, &
(\Gamma^{LC})_{21}^{2} = -\frac{1}{2}(r^2 - s^2)u V^{-1}  , \\
(\Gamma^{LC})_{22}^{1} = -s^2 \bar u V^{-1}, &
(\Gamma^{LC})_{22}^{2} = -\sqrt{-1}|u|^2 V^{-1}, \\
(\Gamma^{LC})_{2\bar1}^{1} = -\frac{1}{2}\sqrt{-1} \bar u^2V^{-1} , &
(\Gamma^{LC})_{2\bar1}^{2} = \frac{1}{2}r^2 \bar u V^{-1} , \\
(\Gamma^{LC})_{2\bar1}^{\bar1} = \frac{1}{2}(-2*\sqrt{-1}r^2s^2 + \sqrt{-1}s^4 + 2\sqrt{-1}|u|^2)V^{-1} , &
(\Gamma^{LC})_{2\bar1}^{\bar2} = \frac{1}{2}s^2 \bar u V^{-1} , \\
(\Gamma^{LC})_{2\bar2}^{1} = -\frac{1}{2} s^2 \bar u V^{-1} , &
(\Gamma^{LC})_{2\bar2}^{2} = -\frac{1}{2}\sqrt{-1}|u|^2V^{-1} , \\
(\Gamma^{LC})_{2\bar2}^{\bar1} = -\frac{1}{2}s^2uV^{-1}, &
(\Gamma^{LC})_{2\bar2}^{\bar2} = \frac{1}{2}\sqrt{-1}|u|^2V^{-1} .
\end{array}
\end{equation*}
We can now compute the Christoffel symbols $(\Gamma^{Ch})_{IH}^{K}$ of the Chern connection by the formula \cite[Equation (7)]{OUV-strominger}:
$$ (\Gamma^{\varepsilon,\rho})_{IH}^{K} = (\Gamma^{LC})_{IH}^{K} + \varepsilon g^{KL} T_{IHL} + \rho g^{KL} C_{IHL} , $$
by setting $(\varepsilon,\rho)=\left(0,\frac{1}{2}\right)$, where
$$ T_{IHL} = -d\omega (J\varphi_I, J\varphi_H, J\varphi_L), \qquad C_{IHL} = d\omega(J\varphi_I, \varphi_H, \varphi_L) . $$
We get
\begin{equation*}
\begin{array}{cc}
(\Gamma^{Ch})_{21}^{2} = -r^2 uV^{-1} , &
(\Gamma^{Ch})_{21}^{1} = \sqrt{-1}r^2s^2V^{-1} , \\
(\Gamma^{Ch})_{1\bar1}^{\bar2} = \sqrt{-1} , &
(\Gamma^{Ch})_{2\bar1}^{\bar1} = -\sqrt{-1} , \\
(\Gamma^{Ch})_{12}^{1} = -\sqrt{-1}s^4V^{-1} , &
(\Gamma^{Ch})_{12}^{2} = s^2uV^{-1},
\end{array}
\end{equation*}
the others being equal to the corresponding Levi-Civita symbols or deduced by conjugation.
We can compute the $(4,0)$-Riemannian curvature of $\nabla^{\varepsilon,\rho}$ as
\begin{eqnarray*}
(R^{\varepsilon,\rho})_{IHKL} &=& g_{AL}(\Gamma^{\varepsilon,\rho})_{HK}^{B}(\Gamma^{\varepsilon,\rho})_{IB}^{A} - g_{AL} (\Gamma^{\varepsilon,\rho})_{IK}^{B} (\Gamma^{\varepsilon,\rho})_{HB}^{A} \\[5pt]
&& - g_{AL} c_{IH}^{B} (\Gamma^{\varepsilon,\rho})_{BK}^{A} .
\end{eqnarray*}
By using the symmetries for the Chern curvature $(R^{Ch})_{IHKL}=-(R^{Ch})_{HIKL}=-(R^{Ch})_{IHLK}$ and the conjugation, we get that the only non-zero components are
\begin{equation*}
\begin{array}{c}
(R^{Ch})_{1\bar11\bar1} = \frac{1}{2} (2r^4s^2 - r^2s^4 - 2(r^2 - s^2)|u|^2) V^{-1} , \\
(R^{Ch})_{1\bar11\bar2} = \frac{1}{2} (\sqrt{-1} |u|^2u + (-\sqrt{-1} r^2 s^2 - \sqrt{-1} s^4) u) V^{-1} , \\
(R^{Ch})_{1\bar12\bar1} = \frac{1}{2}(-\sqrt{-1}|u|^2\bar u - (-\sqrt{-1}r^2s^2 - \sqrt{-1}s^4)\bar u) V^{-1} , \\
(R^{Ch})_{1\bar12\bar2} = \frac{1}{2} s^6 V^{-1} , \\
(R^{Ch})_{1\bar21\bar1} = \frac{1}{2}(-\sqrt{-1}r^2s^2u + 2\sqrt{-1}|u|^2u) V^{-1} , \\
(R^{Ch})_{1\bar21\bar2} = \frac{1}{2} s^2u^2 V^{-1} , \\
(R^{Ch})_{1\bar12\bar1} = -\frac{1}{2} s^2 |u|^2 V^{-1} , \\
(R^{Ch})_{(1\bar22\bar2} = \frac{1}{2}\sqrt{-1}s^4 u V^{-1} , \\
(R^{Ch})_{2\bar11\bar1} = \frac{1}{2} (\sqrt{-1}r^2s^2\bar u - 2\sqrt{-1}|u|^2\bar u) V^{-1} , \\
(R^{Ch})_{2\bar11\bar2} = -\frac{1}{2} s^2 |u|^2 V^{-1} , \\
(R^{Ch})_{2\bar12\bar1} = \frac{1}{2}s^2\bar u^2 V^{-1} , \\
(R^{Ch})_{2\bar12\bar2} = -\frac{1}{2}\sqrt{-1} s^4 \bar u V^{-1} , \\
(R^{Ch})_{2\bar21\bar1} = \frac{1}{2} r^2|u|^2 V^{-1} , \\
(R^{Ch})_{2\bar21\bar2} = -\frac{1}{2}\sqrt{-1}r^2s^2u V^{-1} , \\
(R^{Ch})_{2\bar22\bar1} = \frac{1}{2}\sqrt{-1}r^2s^2\bar u V^{-1} , \\
(R^{Ch})_{2\bar22\bar2} = \frac{1}{2}s^2|u|^2 V^{-1} .
\end{array}
\end{equation*}
Finally, we can compute the (first) Chern-Ricci curvature by tracing on the third and fourth indices:
$$ (\mathrm{Ric}^{Ch})_{IH} = g^{KL}(R^{Ch})_{IHKL} ; $$
then the Cher-Ricci form can be defined as
$$ \mathrm{Ric}^{Ch} = (\mathrm{Ric}^{Ch})_{ih} \sqrt{-1} dz^i \wedge d\bar z^h . $$
In our case, the only non-trivial coefficients are
$$ (\mathrm{Ric}^{Ch})_{1\bar1} = 2 $$
and the corresponding $(\mathrm{Ric}^{Ch})_{\bar11}=-(\mathrm{Ric}^{Ch})_{1\bar1}$. Therefore the Chern-Ricci form of any invariant Hermitian metric is
$$ \mathrm{Ric}^{Ch}(\omega)=2\sqrt{-1} \varphi^1\wedge\bar\varphi^1 . $$ Therefore the solution of the Chern-Ricci flow starting at $\omega_0$ of the form \eqref{eq:metric-initial} is
\begin{equation}\label{eq:crf-hopf}
2\omega(t) = \sqrt{-1}(r_0^2-t)\varphi^{1\bar1}+\sqrt{-1}s_0^2\varphi^{2\bar2}+u_0\varphi^{1\bar2}-\bar u_0\varphi^{2\bar1},
\end{equation}
defined as long as $t<\frac{r_0^2s_0^2-|u_0|^2}{s_0^2}<r_0^2$.

\subsection*{Case 2: Inoue surfaces}
Inoue-Bombieri surfaces \cite{inoue,bombieri} $X$ are compact complex surfaces in class VII with second Betti number equal to zero and with no holomorphic curves \cite{bogomolov-1976, bogomolov, li-yau-zheng0, li-yau-zheng, teleman-ijm}.
Their universal cover is $\mathbb C\times\mathbb H$, where $\mathbb H$ denotes the upper half-plane.
They are divided into three families, $S_M$, $S^+_{N,p,q,r;\mathbf{t}}$, and $S^-_{N,p,q,r}$, depending on parameters.

\subsection*{Case 2.1: Inoue-Bombieri surface of type $S_M$}
We focus now on the case $S_M$: it has a structure of fibre bundle over $\mathbb{S}^1$, where the fibre is a $3$-dimensional torus.

Inoue-Bombieri surfaces of type $S_M$ admit a description as quotients of solvable Lie groups wih invariant complex structure \cite{hasegawa-jsg}, that we now describe.
We can fix a coframe $(\varphi^1,\varphi^2)$ of $(1,0)$-forms with structure equations
$$ d\varphi^1 = \frac{\alpha-\sqrt{-1}\beta}{2\sqrt{-1}}\varphi^{1}\wedge\varphi^2-\frac{\alpha-\sqrt{-1}\beta}{2\sqrt{-1}}\varphi^{1}\wedge\bar\varphi^2 , $$
$$ d\varphi^2=-\sqrt{-1}\alpha\varphi^2\wedge\bar\varphi^2 ,$$
where $\alpha\in\mathbb R\setminus\{0\}$ and $\beta\in\mathbb R$.
The de Rham cohomology of $X$ can be explicitly described \cite{teleman}, see \cite[Theorem 4.1]{angella-dloussky-tomassini}:
\begin{eqnarray*}
H^\bullet_{dR}(X;\mathbb R) &=& \mathbb C \langle 1 \rangle \oplus \mathbb C \langle \varphi^2-\varphi^{\bar2} \rangle \oplus \mathbb C \langle \varphi^{12\bar1}-\varphi^{1\bar1\bar2} \rangle \oplus \mathbb C \langle \varphi^{12\bar1\bar2} \rangle
\end{eqnarray*}
where we have listed the harmonic representatives with respect to the Hermitian metric with fundamental form $2\omega=\sqrt{-1}\varphi^{1\bar1}+\sqrt{-1}\varphi^{2\bar2}$ instead of their classes.

\subsubsection*{Harmonic representatives for cohomologies}
We list the harmonic representatives with respect to the arbitrary Hermitian metric as in \eqref{eq:metric}:
\begin{align}\label{eq:dr-SM}
&H_{dR}^{\bullet}(X;\R) = \\
&\mathbb C \langle 1 \rangle \oplus \mathbb C \langle \varphi^2-\varphi^{\bar2} \rangle \oplus \mathbb C \left\langle -\frac{1}{2} r^2 \varphi^{12\bar1} + \frac{\sqrt{-1}}{2}  u \varphi^{12\bar2} + \frac{1}{2} r^2 \varphi^{1\bar1\bar2} + \frac{\sqrt{-1}}{2} \overline{u} \varphi^{2\bar1\bar2} \right\rangle \oplus \mathbb C \langle \varphi^{12\bar1\bar2} \rangle\notag
\end{align}

We conclude that: {\em any invariant Hermitian metric on an Inoue surface of type $S_M$ is geometrically formal in the sense of Kotschick}.

\subsubsection*{Chern-Ricci flow}
The Chern-Ricci form of any invariant Hermitian metric is
$$ 2\mathrm{Ric}^{Ch}(\omega)=-\sqrt{-1}\alpha^2 \varphi^2\wedge\bar\varphi^2 , $$
whence the solution of the Chern-Ricci flow \eqref{eq:CRF} is given by
\begin{equation}\label{eq:crf-SM}
2\omega(t) = \sqrt{-1}r_0^2\varphi^{1\bar1}+\sqrt{-1}\left(s_0^2+\alpha^2t\right)\varphi^{2\bar2}+u_0\varphi^{1\bar2}-\bar u_0\varphi^{2\bar1},
\end{equation}
defined for any non-negative time $t\geq0$.

Clearly, on an Inoue surface of type $S_M$ with invariant Hermitian metrics, the properties of geometric formality in the sense of Kotschick is preserved along the Chern-Ricci flow.

\subsection*{Case 2.2: Inoue surfaces of class $S^{\pm}$}
In this subsection, we focus on the case of Inoue surfaces of type $S^\pm$.
Inoue-Bombieri surfaces of type $S^-$ have an unramified double cover of type $S^+$: we can then restrict to Inoue-Bombieri surfaces of type $S^+$, which have a structure of fibre bundle over $\mathbb S^1$, where the fibre is a compact quotient of the $3$-dimensional Heisenberg group.

Also Inoue-Bombieri surfaces of type $S^+$ admit a description as quotients of solvable Lie groups \cite{hasegawa-jsg}, that we now describe.
We can fix a coframe $(\varphi^1,\varphi^2)$ of $(1,0)$-forms with structure equations
$$ d\varphi^1 = \frac{1}{2\sqrt{-1}}\varphi^{1}\wedge\varphi^2+\frac{1}{2\sqrt{-1}}\varphi^{2}\wedge\bar\varphi^1+\frac{q\sqrt{-1}}{2}\varphi^2\wedge\bar\varphi^2 , $$
$$ d\varphi^2=\frac{1}{2\sqrt{-1}}\varphi^2\wedge\bar\varphi^2 ,$$
where $q\in\mathbb R$.
The de Rham cohomology of $X$ can be explicitly described \cite{teleman}, see \cite[Theorem 4.1]{angella-dloussky-tomassini}:
\begin{eqnarray*}
H^\bullet_{dR}(X;\mathbb C) &=& \mathbb C \langle 1 \rangle \oplus \mathbb C \langle \varphi^2-\varphi^{\bar2} \rangle \oplus \mathbb C \langle \varphi^{12\bar1}-\varphi^{1\bar1\bar2}\rangle \oplus \mathbb C \langle \varphi^{12\bar1\bar2} \rangle
\end{eqnarray*}
where we have listed the harmonic representatives with respect to the Hermitian metric with fundamental form $\omega=\sqrt{-1}\varphi^{1\bar1}+\sqrt{-1}\varphi^{2\bar2}$ instead of their classes.

\subsubsection*{Harmonic representatives for cohomologies}
The situation is exactly as in (\ref{eq:dr-SM}). We conclude that: {\em any invariant Hermitian metric on an Inoue surface of type $S^{\pm}$ is geometrically formal in the sense of Kotschick}.

\subsubsection*{Chern-Ricci flow}
The Chern-Ricci form of any invariant Hermitian metric is
$$ 2\mathrm{Ric}^{Ch}(\omega)=-\sqrt{-1}\varphi^{2\bar2} , $$
whence the solution of the Chern-Ricci flow \eqref{eq:CRF} is given by
\begin{equation}\label{eq:crf-Spm}
2\omega(t) = \sqrt{-1}r_0^2\varphi^{1\bar1}+\sqrt{-1}\left(s_0^2+t\right)\varphi^{2\bar2}+u_0\varphi^{1\bar2}-\bar u_0\varphi^{2\bar1},
\end{equation}
defined for any non-negative time $t\geq0$.

Clearly, on an Inoue surface of type $S^\pm$ with invariant Hermitian metrics, the properties of geometric formality in the sense of Kotschick is preserved along the Chern-Ricci flow.
\end{proof}

We also analyze in details primary and secondary Kodaira surfaces resulting in the following proposition, for which we give explicit computations.

\begin{prop}\label{prop:Kodaira-1}
On any Kodaira surface, the properties of geometric formality in the sense of Kotschick is preserved along the Chern-Ricci flow starting at initial invariant Hermitian metrics.
\end{prop}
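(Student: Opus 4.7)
The plan splits according to the Kodaira surface type. For \emph{primary Kodaira surfaces}, which are nilmanifolds with $b_1=3$, I would invoke Kotschick's obstruction \cite[Theorem 6]{kotschick} that every Kotschick-geometrically formal compact oriented $4$-manifold must satisfy $b_1\in\{0,1,2,4\}$; since $3$ fails this constraint, no Hermitian metric on a primary Kodaira surface can be Kotschick-geometrically formal, and the preservation statement is vacuous. Alternatively, one may cite \cite{hasegawa-pams} since primary Kodaira surfaces are non-toral nilmanifolds and hence not even Sullivan-formal. The substance of the argument therefore concerns \emph{secondary Kodaira surfaces}, which satisfy $b_1=1$ and $b_2=0$.

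For a secondary Kodaira surface $X$, I would follow the template already set up for the Hopf and Inoue cases in the proof of Theorem~\ref{thm:main}. Since $X$ inherits an invariant complex structure from a primary Kodaira cover by \cite{hasegawa-jsg}, I fix a coframe $(\varphi^1,\varphi^2)$ of invariant $(1,0)$-forms with its explicit structure equations, parametrize an arbitrary invariant Hermitian metric as in \eqref{eq:metric}, and determine the harmonic representatives of $H^\bullet_{dR}(X;\mathbb{C})$ by applying the Hodge-star formulas \eqref{eq:star-10}--\eqref{eq:star-12} to the unique invariant generators of $H^1_{dR}$ and $H^3_{dR}$. Verifying Kotschick formality is then essentially automatic: since $\dim_\mathbb{C} H^1_{dR}(X;\mathbb{C})=1$, any two harmonic $1$-forms are proportional and hence wedge to zero; since $H^2_{dR}(X)=0$, all pairings producing a $2$-form must vanish and wedges involving harmonic $2$-forms are vacuous; finally, the wedge of a harmonic $1$-form with a harmonic $3$-form is an invariant $4$-form, hence a scalar multiple of $\varphi^{12\bar1\bar2}$, which is harmonic with respect to every invariant metric. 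Thus \emph{every} invariant Hermitian metric on $X$ is Kotschick-geometrically formal.

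Finally, I would compute the Chern-Ricci form $\mathrm{Ric}^{Ch}(\omega)$ of an arbitrary invariant metric via the Christoffel-curvature-contraction procedure of Section~2, using SageMath \cite{sage} to automate the bookkeeping as in the Hopf case. As emphasized in the preliminaries, this form is independent of the invariant Hermitian metric since $c_1^{BC}(X)$ contains a unique invariant representative, so \eqref{eq:CRF} reduces to a linear ODE whose solution starting at \eqref{eq:metric-initial} is $2\omega(t)=2\omega_0-2t\,\mathrm{Ric}^{Ch}(\omega_0)$ on its maximal existence interval. This solution remains invariant, so by the previous paragraph it is Kotschick-geometrically formal for every admissible $t$, and preservation under the flow is immediate. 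The only genuine labor is the explicit calculation of the structure equations and of $\mathrm{Ric}^{Ch}$ for secondary Kodaira surfaces, which I expect to be routine but tedious; no conceptual obstacle arises beyond what was already overcome in the Hopf analysis.
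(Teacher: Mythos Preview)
Your proposal is correct and follows essentially the same route as the paper: rule out primary Kodaira surfaces via $b_1=3$ and \cite[Theorem 6]{kotschick} (or \cite{hasegawa-pams}), then show for secondary Kodaira surfaces that \emph{every} invariant Hermitian metric is Kotschick-geometrically formal, whence preservation under the flow is automatic. The one shortcut you miss is that the paper bypasses the explicit Christoffel--curvature computation entirely: since the canonical bundle of a primary (respectively secondary) Kodaira surface is trivial (respectively torsion), one has $\mathrm{Ric}^{Ch}(\omega)=0$ for any invariant metric, so the Chern-Ricci flow is stationary and no ``routine but tedious'' labor is needed.
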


\begin{proof}
We will look at each case separatedly.

\subsection*{Case 1: Primary Kodaira surface}
Kodaira surfaces $X$ are compact complex surfaces of Kodaira dimension $\text{Kod}(X)=0$ and first Betti number $b_1(X)=3$.
Primary Kodaira surfaces have trivial canonical bundle. 

We note that, by \cite[Theorem 6]{kotschick}, primary Kodaira surfaces are never Kotschick-geometrically formal, not even with regards to non-invariant metrics, by having $b_1=3$: hence \emph{Chern-Ricci flow preserves geometric formality according to Kotschick}.
The same conclusion follows by \cite[Theorem 1]{hasegawa-pams} stating that non-tori nilmanifolds are never formal, therefore never geometrically formal in the sense of Kotschick.
Nevertheless we give explicit computations for this fact.

We recall the description of primary Kodaira surfaces as quotients of solvable Lie groups \cite{hasegawa-jsg}.
There exists a coframe $(\varphi^1,\varphi^2)$ of $(1,0)$-forms with structure equations
$$ d\varphi^1 = 0 , \qquad d\varphi^2=\frac{\sqrt{-1}}{2}\varphi^1\wedge\bar\varphi^1 .$$
The de Rham cohomology of $X$ can be explicitly described:
\begin{eqnarray*}
H^\bullet_{dR}(X;\mathbb R) &=& \mathbb C \langle 1 \rangle \oplus \mathbb C \langle \varphi^{1}, \varphi^{\bar1}, \varphi^2-\varphi^{\bar2} \rangle \oplus \mathbb C \langle \varphi^{12}, \varphi^{1\bar2}, \varphi^{2\bar1}, \varphi^{\bar1\bar2} \rangle \\
&& \oplus \mathbb C \langle \varphi^{12\bar2}, \varphi^{2\bar1\bar2},\varphi^{12\bar1}-\varphi^{1\bar1\bar2} \rangle \oplus \mathbb C \langle \varphi^{12\bar1\bar2} \rangle
\end{eqnarray*}
where we have listed the harmonic representatives with respect to the Hermitian metric with fundamental form $\omega=\sqrt{-1}\varphi^{1\bar1}+\sqrt{-1}\varphi^{2\bar2}$ instead of their classes.

We list the harmonic representatives with respect to the arbitrary Hermitian metric as in \eqref{eq:metric}:
\begin{eqnarray*}
H^\bullet_{dR}(X;\mathbb R) &=& \mathbb C \langle 1 \rangle \oplus \mathbb C \langle \varphi^{1}, \varphi^{\bar1}, \varphi^2-\varphi^{\bar2} \rangle \oplus \mathbb C \left\langle \varphi^{12}, \varphi^{1\bar2}+\frac{\sqrt{-1} \, \overline{u}}{s^{2}}  \varphi^{1\bar1} , \varphi^{2\bar1} - \frac{\sqrt{-1} \, u}{s^{2}}  \varphi^{1\bar1}, \varphi^{\bar1\bar2} \right\rangle \\
&& \oplus \mathbb C \left\langle \frac{1}{2} \, s^{2}  \varphi^{12\bar2} + \frac{\sqrt{-1}}{2} \, \overline{u}  \varphi^{12\bar1} , \frac{1}{2} \, s^{2} \varphi^{2\bar1\bar2} - \frac{\sqrt{-1}}{2}\, u  \varphi^{1\bar1\bar2} , \right.\\
&& \left. -\frac{1}{2} \, r^{2}  \varphi^{12\bar1} + \frac{\sqrt{-1}}{2} \, u  \varphi^{12\bar2} + \frac{1}{2} \, r^{2}  \varphi^{1\bar1\bar2} + \frac{\sqrt{-1}}{2} \, \overline{u}  \varphi^{2\bar1\bar2} \right\rangle \oplus \mathbb C \langle \varphi^{12\bar1\bar2} \rangle ,\\
\end{eqnarray*}

We explicitly notice that, {\itshape{on primary Kodaira surfaces, an invariant Hermitian metric is never geometrically formal in the sense of Kotschick}}: indeed, $\varphi^1 \wedge \varphi^{\bar1\bar2}$ is never harmonic.

As for Chern-Ricci flow, the primary Kodaira surface has trivial canonical bundle, therefore $\mathrm{Ric}^{Ch}(\omega)=0$. Then, clearly, the Chern-Ricci flow does not evolve invariant Hermitian metrics.
\subsection*{Case 2: Secondary Kodaira surface}
Secondary Kodaira surfaces $X$ are quotients of primary Kodaira surfaces by finite groups; they have torsion non-trivial canonical bundle.

We recall the description of secondary Kodaira surfaces as quotients of solvable Lie groups \cite{hasegawa-jsg}.
There exists a coframe $(\varphi^1,\varphi^2)$ of $(1,0)$-forms with structure equations
$$ d\varphi^1 = -\frac{1}{2}\varphi^1\wedge\varphi^2+\frac{1}{2}\varphi^1\wedge\bar\varphi^2 , \qquad d\varphi^2=\frac{\sqrt{-1}}{2}\varphi^1\wedge\bar\varphi^1 .$$
The cohomologies of $X$ can be explicitly described:
\begin{eqnarray*}
H^\bullet_{dR}(X;\mathbb R) &=& \mathbb C \langle 1 \rangle \oplus \mathbb C \langle \varphi^2-\varphi^{\bar2} \rangle \oplus \mathbb C \langle \varphi^{12\bar1}-\varphi^{1\bar1\bar2} \rangle \oplus \mathbb C \langle \varphi^{12\bar1\bar2} \rangle,
\end{eqnarray*}
where we have listed the harmonic representatives with respect to the Hermitian metric with fundamental form $\omega=\sqrt{-1}\varphi^{1\bar1}+\sqrt{-1}\varphi^{2\bar2}$ instead of their classes.

As for the harmonic representatives for de Rham cohomology, the situation is very similar to the Inoue case. We list the harmonic representatives with respect to the arbitrary Hermitian metric as in \eqref{eq:metric}:
\begin{eqnarray*}
H^\bullet_{dR}(X;\mathbb R) &=& \mathbb C \langle 1 \rangle \oplus \mathbb C \langle \varphi^2-\varphi^{\bar2} \rangle \\
&& \oplus \mathbb C \left\langle -\frac{1}{2} r^2 \varphi^{12\bar1} + \frac{\sqrt{-1}}{2}  u \varphi^{12\bar2} + \frac{1}{2} r^2 \varphi^{1\bar1\bar2} + \frac{\sqrt{-1}}{2} \overline{u} \varphi^{2\bar1\bar2} \right\rangle \oplus \mathbb C \langle \varphi^{12\bar1\bar2} \rangle.
\end{eqnarray*}
We conclude that {\itshape any invariant Hermitian metric on an secondary Kodaira is geometrically formal in the sense of Kotschick}.

As for the Chern-Ricci flow, the secondary Kodaira surface has torsion canonical bundle, therefore $\mathrm{Ric}^{Ch}(\omega)=0$. Therefore, the Chern-Ricci flow does not evolve invariant Hermitian metrics.
\end{proof}

\section{Dolbeault and Bott-Chern geometric formalities}
As for Dolbeault or Bott-Chern geometric formality, the situation is clear for Hopf, Inoue and Kodaira surfaces, as we now describe. We also make computations regarding Aeppli cohomology and harmonic representatives with respect to the Aeppli Laplacian.
\begin{prop}\label{prop:db-bc}
On Hopf, Inoue, and Kodaira surfaces, the property of Dolbeault geometric formality and of Bott-Chern geometric formality is preserved along the Chern-Ricci flow starting at invariant metrics. In the same situation, the properties of having a structure of algebra or a structure of $H_{BC}$-module for harmonic-Aeppli forms are all preserved by the Chern Ricci flow.
\end{prop}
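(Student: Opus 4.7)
The plan mirrors the proofs of Theorem~\ref{thm:main} and Proposition~\ref{prop:Kodaira-1}. Since the Chern-Ricci flow starting at an invariant Hermitian metric stays invariant, and since the inclusions $\iota\colon(\wedge^{\bullet,\bullet}\mathfrak g^\vee, \partial, \overline\partial)\hookrightarrow(\wedge^{\bullet,\bullet}X, \partial, \overline\partial)$ induce isomorphisms in Dolbeault, Bott-Chern, and Aeppli cohomology (as recalled in the Preliminaries), the problem reduces to an explicit finite-dimensional computation on invariant forms, parameterized by the three real degrees of freedom $(r^2, s^2, u)$ in the general invariant metric \eqref{eq:metric}.

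For each of the five surfaces (Hopf, Inoue of type $S_M$, Inoue of type $S^{\pm}$, primary Kodaira, secondary Kodaira), I would proceed as follows. First, using the Hodge-star formulas \eqref{eq:star-10}--\eqref{eq:star-12}, I would compute the Dolbeault, Bott-Chern, and Aeppli Laplacians on each bidegree component of invariant forms and extract by linear algebra the harmonic representatives as explicit linear combinations with coefficients that are rational functions of $(r^2, s^2, u, \bar u, V)$, where $V = r^2 s^2 - |u|^2$. Second, for every pair of such harmonic representatives I would form the wedge product and test, by reapplying the appropriate Laplacian (or equivalently checking the relevant differential and codifferential conditions), whether the product is again harmonic.

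For the algebra structures on Dolbeault and Bott-Chern cohomology, and for the candidate algebra structure on Aeppli, the test is whether $\alpha \wedge \beta$ is harmonic whenever $\alpha, \beta$ are. For the $H_{BC}$-module structure on Aeppli, the test is whether multiplication by a Bott-Chern harmonic form preserves the Aeppli harmonic space. Bidegree by bidegree, these conditions reduce to polynomial identities in $(r^2, s^2, u)$; if they hold, they hold for \emph{every} admissible invariant metric. Once this is established, preservation along the flow is automatic, since the Chern-Ricci flow solutions \eqref{eq:crf-hopf}, \eqref{eq:crf-SM}, \eqref{eq:crf-Spm} (and the stationary flow in the Kodaira cases, where $c_1^{BC}=0$) stay within the family \eqref{eq:metric}, so the status of the property is constant in $t$.

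The principal obstacle is the sheer size of the computation, particularly for the fourth-order Bott-Chern and Aeppli Laplacians, and on Inoue surfaces where the structure equations are less sparse than on Kodaira surfaces. I would therefore delegate the symbolic manipulations to SageMath, as in the preceding proofs, and organize the output bidegree by bidegree to verify the required harmonicity of products uniformly in $(r^2, s^2, u)$.
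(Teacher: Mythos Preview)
Your approach is essentially the paper's: a case-by-case computation of invariant harmonic representatives for Dolbeault, Bott-Chern, and Aeppli cohomology, followed by checking wedge products. Two remarks are in order.

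First, a small but real gap. You anticipate a dichotomy (``if they hold, they hold for \emph{every} admissible invariant metric''), and your preservation argument rests on this. In fact the paper finds that the Aeppli-harmonic forms constitute an algebra \emph{if and only if the metric is diagonal}, i.e.\ $u=0$: for Hopf, Inoue $S_M$, Inoue $S^{\pm}$, and secondary Kodaira the product $\varphi^2\wedge\bar\varphi^2$ is Aeppli-harmonic precisely when $u=0$. So the condition cuts out a proper subfamily of \eqref{eq:metric}, and your ``automatic'' preservation step does not apply as stated. What saves the conclusion is that in every case the Chern-Ricci form has $\rho_u=0$ (see \eqref{eq:crf-hopf}, \eqref{eq:crf-SM}, \eqref{eq:crf-Spm}, and the Kodaira cases with $\mathrm{Ric}^{Ch}=0$), so $u(t)\equiv u_0$ and diagonality is flow-invariant. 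You should be prepared for metric-dependent conditions and then verify that the specific Chern-Ricci flows preserve them.

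Second, a methodological comparison. The paper largely avoids computing the fourth-order Laplacians head-on. It exploits two shortcuts: (i) many cohomology classes (e.g.\ $[\varphi^{1\bar1}]\in H^{1,1}_{BC}$ on Hopf, $[\varphi^2]_A$, $[\bar\varphi^2]_A$) contain a \emph{unique} invariant representative, which is then automatically harmonic for every invariant metric; and (ii) Hodge-star duality \eqref{eq:hdg-iso} transports known harmonic representatives to dual bidegrees via the explicit formulas \eqref{eq:star-10}--\eqref{eq:star-12}. Most products are then trivial by degree, and only a handful (notably the Aeppli class $[\varphi^{2\bar2}]$) require solving a small linear system. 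Your brute-force Laplacian computation would of course reach the same endpoint, but these shortcuts make the argument far lighter and more transparent.
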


\begin{proof}
We refer to the complex structures used in Theorem \ref{thm:main} and Proposition \ref{prop:Kodaira-1}, for the computation of Dolbeault, Bott-Chern, and Aeppli cohomologies.
\subsection*{Hopf surfaces}

The Dolbeault cohomology of Hopf surfaces is explicitly described in \cite[Appendix II, Theorem 9.5]{hirzebruch}, and see \cite[Section 3.1]{angella-dloussky-tomassini} for the Bott-Chern cohomology:
\begin{eqnarray*}
H^{\bullet,\bullet}_{\overline\partial}(X) &=& \mathbb C \langle 1 \rangle \oplus \mathbb C \langle \varphi^{\bar{2}} \rangle \oplus \mathbb C \langle \varphi^{12\bar1} \rangle \oplus \mathbb C \langle \varphi^{12\bar1\bar2} \rangle , \\
H^{\bullet,\bullet}_{BC}(X) &=& \mathbb C \langle 1 \rangle \oplus \mathbb C \langle \varphi^{1\bar{1}} \rangle \oplus \mathbb C \langle \varphi^{12\bar1} \rangle \oplus \mathbb C \langle \varphi^{1\bar1\bar2} \rangle \oplus \mathbb C \langle \varphi^{12\bar1\bar2} \rangle , \\
H^{\bullet,\bullet}_{A}(X) &=& \mathbb C \langle 1 \rangle \oplus \mathbb C \langle \varphi^{2} \rangle \oplus \mathbb C \langle \varphi^{\bar2} \rangle \oplus \mathbb C \langle \varphi^{2\bar2} \rangle \oplus \mathbb C \langle \varphi^{12\bar1\bar2} \rangle ,
\end{eqnarray*}
where we have listed the harmonic representatives with respect to the Hermitian metric with fundamental form $2\omega=\sqrt{-1}\varphi^{1\bar1}+\sqrt{-1}\varphi^{2\bar2}$ instead of their classes.

We look at how the harmonic representatives of such cohomologies change with respect to the invariant Hermitian metric, and in particular when their product is still harmonic.

We summarize them as follows:
\begin{eqnarray*}
H^{\bullet,\bullet}_{\overline\partial}(X) &=& \mathbb C \langle 1 \rangle \oplus \mathbb C \langle \varphi^{\bar{2}} \rangle \oplus \mathbb C \langle -\frac{1}{2}r^2 \varphi^{1\bar1\bar2} - \frac{\sqrt{-1}}{2} \overline{u} \varphi^{2\bar1\bar2} \rangle \oplus \mathbb C \langle \varphi^{12\bar1\bar2} \rangle , \\
H^{\bullet,\bullet}_{BC}(X) &=& \mathbb C \langle 1 \rangle \oplus \mathbb C \langle \varphi^{1\bar{1}} \rangle \oplus \mathbb C \langle -\frac{1}{2} r^2 \varphi^{12\bar1} + \frac{\sqrt{-1}}{2} u \varphi^{12\bar2} \rangle \oplus \mathbb C \langle -\frac{1}{2} r^2 \varphi^{1\bar1\bar2}- \frac{\sqrt{-1}}{2} \overline{u} \varphi^{2\bar1\bar2} \rangle \oplus \mathbb C \langle \varphi^{12\bar1\bar2} \rangle , \\
H^{\bullet,\bullet}_{A}(X) &=& \mathbb C \langle 1 \rangle \oplus \mathbb C \langle \varphi^{2} \rangle \oplus \mathbb C \langle \varphi^{\bar2} \rangle \oplus \mathbb C \langle s^4\varphi^{2\bar2}+|u|^2\varphi^{1\overline{1}}-\sqrt{-1}s^2\,u\varphi^{1\overline{2}}+\sqrt{-1}s^2\overline{u}\varphi^{2\overline{1}} \rangle \oplus \mathbb C \langle \varphi^{12\bar1\bar2} \rangle ,
\end{eqnarray*}

Let us focus first on the Dolbeault cohomology. Here, the only Dolbeault-harmonic representative that changes is for the generator in $H^{1,2}_{\overline\partial}(X)$. We conclude that {\em any invariant Hermitian metric on the Hopf surface is geometrically-Dolbeault formal}.

As regards the Bott-Chern cohomology, to our aim, that is, studying harmonicity of products of Bott-Chern-harmonic forms, the only case of interest is the product $[\varphi^{1\bar1}]\smile[\varphi^{1\bar1}]$, the products with the class $[1]$ being trivial and the other products being zero because of degree reasons. Since the harmonic representatives with respect to invariant metrics are invariant, the Bott-Chern class $[\varphi^{1\overline{1}}]=\{\varphi^{1\overline{1}}+\del\delbar c \;:\; c \in \mathbb R\}$ contains only one invariant representantive, that is also harmonic with respect to any invariant Hermitian metric.
Again, we have that {\em any invariant Hermitian metric on the Hopf surface is geometrically-Bott-Chern formal}.

We consider the Aeppli cohomology. On the one side, we can consider the products between Aeppli-harmonic forms: the only possibly non-trivial products concern the classes $[\varphi^2]$ and $[\bar\varphi^2]$, $[\varphi^2]$ and $[\varphi^2\wedge\bar\varphi^2]$, $[\bar\varphi^2]$ and $[\varphi^2\wedge\bar\varphi^2]$. Since the classes $[\varphi^2]$ and $[\bar\varphi^2]$ contain only one invariant representative, we are reduced to study how the harmonic representative of the Aeppli cohomology class $[\varphi^2\wedge\bar\varphi^2]$ depends on the invariant Hermitian metric.
The arbitrary representative in the Aeppli cohomology class $[\varphi^{2\bar2}]$ is
\begin{eqnarray*}
h&:=&\varphi^{2}\wedge\bar\varphi^{2}+\partial\left( \lambda_1\bar\varphi^1+\lambda_2\bar\varphi^2 \right)+\overline\partial\left( \lambda_3\varphi^1+\lambda_4\varphi^2 \right) \\
&=& \varphi^{2\bar2}-\sqrt{-1}\left(\lambda_2+\lambda_4\right)\varphi^{1\bar1}+\sqrt{-1}\lambda_1\varphi^{2\bar1}+\sqrt{-1}\lambda_3\varphi^{1\bar2},
\end{eqnarray*}
where $\lambda_1,\lambda_2,\lambda_3,\lambda_4\in\mathbb C$.
By \eqref{eq:star-11}, we compute
\begin{eqnarray*}
V \cdot \ast_g h&=&V \cdot \left( \ast_g\,\varphi^{2\overline{2}}+\sqrt{-1}\lambda_1\ast_g\varphi^{2\overline{1}}-\sqrt{-1}\left(\lambda_2+\lambda_4\right)\ast_g\varphi^{1\overline{1}}+\sqrt{-1}\lambda_3\ast_g\varphi^{1\overline{2}} \right) \\
&=& \left(r^4-\lambda_1ur^2-\sqrt{-1}(\lambda_2+\lambda_4)|u|^2+\lambda_3 \overline{u} r^2\right)\varphi^{1\overline{1}} \\
&& + \left(-\sqrt{-1} ur^2+\sqrt{-1}\lambda_1 u^2-(\lambda_2+\lambda_4)us^2-\sqrt{-1}\lambda_3r^2s^2\right) \varphi^{1\overline{2}} \\
&&
+\left(\sqrt{-1}\overline{u}r^2-\sqrt{-1}\lambda_1r^2s^2+(\lambda_2+\lambda_4)\overline{u}s^2+\sqrt{-1}\lambda_3 \overline{u}^2\right) \varphi^{2\overline{1}} \\
&& + \left(|u|^2-\lambda_1us^2-\sqrt{-1}(\lambda_2+\lambda_4)s^4+\lambda_3\overline{u}s^2 \right)\varphi^{2\overline{2}} .
\end{eqnarray*}
By using the structure equations, we now compute
\begin{eqnarray*}
\partial(\ast_gh) &=& \sqrt{-1}\frac{-\sqrt{-1}ur^2+\sqrt{-1}\lambda_1u^2-(\lambda_2+\lambda_4)us^2-\sqrt{-1}\lambda_3r^2s^2}{r^2s^2-|u|^2}\varphi^{12\overline{2}}\\
&&-\sqrt{-1}\frac{|u|^2-\lambda_1us^2-\sqrt{-1}(\lambda_2+\lambda_4)s^4+\lambda_3\overline{u}s^2}{r^2s^2-|u|^2}\varphi^{12\overline{1}},\\
\overline\partial(\ast_gh)&=&\sqrt{-1}\frac{\sqrt{-1}\overline{u}r^2-\sqrt{-1}\lambda_1r^2s^2+(\lambda_2+\lambda_4)\overline{u}s^2+\sqrt{-1}\lambda_3\overline{u}^2}{r^2s^2-|u|^2}\varphi^{2\overline{12}}\\
&&-\sqrt{-1}\frac{|u|^2-\lambda_1us^2-\sqrt{-1}(\lambda_2+\lambda_4)s^4+\lambda_3\overline{u}s^2}{r^2s^2-|u|^2}\varphi^{1\overline{12}}.
\end{eqnarray*}
Therefore the Aeppli-harmonicity conditions $\partial\overline\partial h=\partial\ast_gh=\overline\partial\ast_gh=0$ yield
\begin{equation*}
\begin{pmatrix}
\sqrt{-1}u^2 & -us^2 & -\sqrt{-1}r^2s^2 & -us^2\\
-us^2 & -\sqrt{-1}s^4 & \overline{u}s^2 & -\sqrt{-1}s^4\\
-\sqrt{-1}r^2s^2 & \overline{u}s^2 & \sqrt{-1}\overline{u}^2 & \overline{u}s^2\\
-us^2 & -\sqrt{-1}s^4 & \overline{u}s^2 & -\sqrt{-1}s^4
\end{pmatrix}
\begin{pmatrix}
\lambda_1\\
\lambda_2\\
\lambda_3\\
\lambda_4
\end{pmatrix}=
\begin{pmatrix}
\sqrt{-1}ur^2\\
-|u|^2\\
-\sqrt{-1}\overline{u}r^2\\
-|u|^2
\end{pmatrix},
\end{equation*}
where the rank of the first matrix is $3$ thanks to the condition $r^2s^2-|u|^2>0$. By solving the system, we get
\begin{eqnarray*}
\lambda_1&=& \frac{\overline{u}}{s^2},\\
\lambda_2&=&\frac{\sqrt{-1}|u|^2}{s^4}-\lambda,\\
\lambda_3&=&-\frac{u}{s^2},\\
\lambda_4&=&\lambda,
\end{eqnarray*}
varying $\lambda\in\mathbb C$. We finally get that the harmonic representative of $[\varphi^{2\bar2}]$ with respect to $g$ is
$$ h = \frac{|u|^2}{s^4}\varphi^{1\overline{1}}-\frac{\sqrt{-1}u}{s^2}\varphi^{1\overline{2}}+\frac{\sqrt{-1}\overline{u}}{s^2}\varphi^{2\overline{1}}+\varphi^{2\overline{2}} .$$
At the end of the day, we get that: {\em Aeppli-hamornic forms have a structure of algebra if and only if the metric \eqref{eq:metric} is diagonal, namely, $u=0$}.

Finally, we consider the Aeppli cohomology as a Bott-Chern-cohomology-module. By the Stokes theorem, there is no invariant exact $4$-form other than the zero form; in particular, any invariant $(2,2)$-form is harmonic with respect to any Hermitian invariant metric. This reduces to consider only the products $[\varphi^{1\bar1}]_{BC}\smile[\varphi^2]_{A}$ and $[\varphi^{1\bar1}]_{BC}\smile[\bar\varphi^2]_{A}$. By the argument above, both $[\varphi^{1\bar1}]_{BC}$ and $[\varphi^2]_{A}$, respectively $[\bar\varphi^2]_{A}$, contain only one invariant representative that is harmonic with respect to any Hermitian metric. Therefore: {\em for any invariant Hermitian metric on the Hopf surface, Aeppli-harmonic forms have a structure of module over Bott-Chern-harmonic forms}.

As regards the Chern-Ricci flow, we already have an expression for it computed in (\ref{eq:crf-hopf}). Clearly, then, on the Hopf surface with invariant Hermitian metrics, \emph{the properties of geometric-Dolbeault formality, of geometric-Bott-Chern formality, of the Aeppli-harmonic forms having a structure of algebra, of the Aeppli-hamornic forms having a structure of module over Bott-Chern-harmonic forms, are all preserved along the Chern-Ricci flow}.

\subsection*{Inoue-Bombieri surfaces of type $S_M$}
The cohomologies of Inoue-Bombieri surfaces of type $S_M$ can be explicitly described \cite{teleman}, see \cite[Theorem 4.1]{angella-dloussky-tomassini}:
\begin{eqnarray*}
H^{\bullet,\bullet}_{\overline\partial}(X) &=& \mathbb C \langle 1 \rangle \oplus \mathbb C \langle \varphi^{\bar{2}} \rangle \oplus \mathbb C \langle \varphi^{12\bar1} \rangle \oplus \mathbb C \langle \varphi^{12\bar1\bar2} \rangle , \\
H^{\bullet,\bullet}_{BC}(X) &=& \mathbb C \langle 1 \rangle \oplus \mathbb C \langle \varphi^{2\bar{2}} \rangle \oplus \mathbb C \langle \varphi^{12\bar1} \rangle \oplus \mathbb C \langle \varphi^{1\bar1\bar2} \rangle \oplus \mathbb C \langle \varphi^{12\bar1\bar2} \rangle , \\
H^{\bullet,\bullet}_{A}(X) &=& \mathbb C \langle 1 \rangle \oplus \mathbb C \langle \varphi^{2} \rangle \oplus \mathbb C \langle \varphi^{\bar2} \rangle \oplus \mathbb C \langle \varphi^{1\bar1} \rangle \oplus \mathbb C \langle \varphi^{12\bar1\bar2} \rangle ,
\end{eqnarray*}
where we have listed the harmonic representatives with respect to the Hermitian metric with fundamental form $2\omega=\sqrt{-1}\varphi^{1\bar1}+\sqrt{-1}\varphi^{2\bar2}$ instead of their classes.

We list the harmonic representatives with respect to the arbitrary Hermitian metric as in \eqref{eq:metric}:
\begin{align}\label{eq:harm-SM}
H^{\bullet,\bullet}_{\overline\partial}(X) &= \mathbb C \langle 1 \rangle \oplus \mathbb C \langle \varphi^{\bar{2}} \rangle \oplus \mathbb C \left\langle -\frac{1}{2} r^2\varphi^{1\bar1\bar2}- \frac{\sqrt{-1}}{2} \overline{u} \varphi^{2\bar1\bar2} \right\rangle \oplus \mathbb C \langle \varphi^{12\bar1\bar2} \rangle ,\notag \\
H^{\bullet,\bullet}_{BC}(X) &=\mathbb C \langle 1 \rangle \oplus \mathbb C \langle \varphi^{2\bar{2}} \rangle \oplus \mathbb C \left\langle -\frac{1}{2} r^2\varphi^{1\bar1\bar2}- \frac{\sqrt{-1}}{2} \overline{u} \varphi^{2\bar1\bar2} \right\rangle\notag \\
& \oplus \mathbb C \left\langle -\frac{1}{2} r^2 \varphi^{12\bar1} + \frac{\sqrt{-1}}{2} u \varphi^{12\bar2} \right\rangle \oplus \mathbb C \langle \varphi^{12\bar1\bar2} \rangle , \\
H^{\bullet,\bullet}_{A}(X) &=\mathbb C \langle 1 \rangle \oplus \mathbb C \langle \varphi^{2} \rangle \oplus \mathbb C \langle \varphi^{\bar2} \rangle \notag\\ 
& \oplus \mathbb C \left\langle \varphi^{1\bar1} - \frac{\sqrt{-1} \, u}{r^{2}}  \varphi^{1\bar2} + \frac{\sqrt{-1} \, \overline{u}}{r^{2}}  \varphi^{2\bar1} + \frac{|u|^2}{r^4} \varphi^{2\bar2} \right\rangle \oplus \mathbb C \langle \varphi^{12\bar1\bar2} \rangle ,\notag
\end{align}

We conclude that: {\em any invariant Hermitian metric on an Inoue surface of type $S_M$ is geometrically-Dolbeault formal, is geometrically-Bott-Chern formal, and the Aeppli-harmonic forms have a structure of module over Bott-Chern-harmonic forms. On the other hand, Aeppli-harmonic forms have a structure of algebra if and only if the metric is diagonal}.

The Chern-Ricci flow has expression as in (\ref{eq:crf-SM}).
Clearly, we can state that on an Inoue surface of type $S_M$ with invariant Hermitian metrics, \emph{the properties of Dolbeault-geometric formality, of Bott-Chern-geometric formality, of the Aeppli-harmonic forms having a structure of algebra, of the Aeppli-hamornic forms having a structure of module over Bott-Chern-harmonic forms, are all preserved along the Chern-Ricci flow.}

\subsection*{Inoue surfaces of type $S^{\pm}$}
The cohomologies of Inoue surfaces of type $S^{\pm}$ can be explicitly described \cite{teleman}, see \cite[Theorem 4.1]{angella-dloussky-tomassini}:
\begin{eqnarray*}
H^{\bullet,\bullet}_{\overline\partial}(X) &=& \mathbb C \langle 1 \rangle \oplus \mathbb C \langle \varphi^{\bar{2}} \rangle \oplus \mathbb C \langle \varphi^{12\bar1} \rangle \oplus \mathbb C \langle \varphi^{12\bar1\bar2} \rangle , \\
H^{\bullet,\bullet}_{BC}(X) &=& \mathbb C \langle 1 \rangle \oplus \mathbb C \langle \varphi^{2\bar{2}} \rangle \oplus \mathbb C \langle \varphi^{12\bar1} \rangle \oplus \mathbb C \langle \varphi^{1\bar1\bar2} \rangle \oplus \mathbb C \langle \varphi^{12\bar1\bar2} \rangle , \\
H^{\bullet,\bullet}_{A}(X) &=& \mathbb C \langle 1 \rangle \oplus \mathbb C \langle \varphi^{2} \rangle \oplus \mathbb C \langle \varphi^{\bar2} \rangle \oplus \mathbb C \langle \varphi^{1\bar1} \rangle \oplus \mathbb C \langle \varphi^{12\bar1\bar2} \rangle ,
\end{eqnarray*}
where we have listed the harmonic representatives with respect to the Hermitian metric with fundamental form $\omega=\sqrt{-1}\varphi^{1\bar1}+\sqrt{-1}\varphi^{2\bar2}$ instead of their classes.

As for the harmonic representatives of Dolbeault, Bott-Chern and Aeppli cohomologies, the situation is exactly as (\ref{eq:harm-SM}).

We conclude that: {\em any invariant Hermitian metric on an Inoue surface of type $S^{\pm}$ is geometrically-Dolbeault formal, is geometrically-Bott-Chern formal, and the Aeppli-harmonic forms have a structure of module over Bott-Chern-harmonic forms. On the other hand, Aeppli-harmonic forms have a structure of algebra if and only if the metric is diagonal}.

The Chern-Ricci flow has expression as in (\ref{eq:crf-Spm}). Hence, we have that on an Inoue surface of type $S^\pm$ with invariant Hermitian metrics, \emph{the properties of geometric-Dolbeault formality, of geometric-Bott-Chern formality, of the Aeppli-harmonic forms having a structure of algebra, of the Aeppli-hamornic forms having a structure of module over Bott-Chern-harmonic forms, are all preserved along the Chern-Ricci flow.}

\subsection*{Primary Kodaira surfaces} 
The cohomologies of primary Kodaira surfaces can be explicitly described \cite{teleman}, see \cite[Theorem 4.1]{angella-dloussky-tomassini}:
\begin{eqnarray*}
H^{\bullet,\bullet}_{\overline\partial}(X) &=& \mathbb C \langle 1 \rangle
\oplus \mathbb C \langle \varphi^{1} \rangle
\oplus \mathbb C \langle \varphi^{\bar1}, \varphi^{\bar{2}} \rangle
\oplus \mathbb C \langle \varphi^{12} \rangle
\oplus \mathbb C \langle \varphi^{1\bar2}, \varphi^{2\bar1} \rangle
\oplus \mathbb C \langle \varphi^{\bar1\bar2} \rangle \\
&&
\oplus \mathbb C \langle \varphi^{12\bar1}, \varphi^{12\bar2} \rangle
\oplus \mathbb C \langle \varphi^{2\bar1\bar2} \rangle
 \oplus \mathbb C \langle \varphi^{12\bar1\bar2} \rangle , \\
H^{\bullet,\bullet}_{BC}(X) &=& \mathbb C \langle 1 \rangle \oplus \mathbb C \langle \varphi^{1} \rangle \oplus \mathbb C \langle \varphi^{\bar1} \rangle
\oplus \mathbb C \langle \varphi^{12} \rangle
\oplus \mathbb C \langle \varphi^{1\bar1}, 	\varphi^{1\bar2}, \varphi^{2\bar1} \rangle
\oplus \mathbb C \langle \varphi^{\bar1\bar2} \rangle \\
&&
\oplus \mathbb C \langle \varphi^{12\bar1}, \varphi^{12\bar2} \rangle
\oplus \mathbb C \langle \varphi^{1\bar1\bar2}, \varphi^{2\bar1\bar2} \rangle
 \oplus \mathbb C \langle \varphi^{12\bar1\bar2} \rangle , \\
H^{\bullet,\bullet}_{A}(X) &=& \mathbb C \langle 1 \rangle \oplus \mathbb C \langle \varphi^{1}, \varphi^2 \rangle \oplus \mathbb C \langle \varphi^{\bar1}, \varphi^{\bar2} \rangle
\oplus \mathbb C \langle \varphi^{12} \rangle
\oplus \mathbb C \langle \varphi^{1\bar2}, \varphi^{2\bar1}, \varphi^{2\bar2} \rangle
\oplus \mathbb C \langle \varphi^{\bar1\bar2} \rangle \\
&&
\oplus \mathbb C \langle \varphi^{12\bar2} \rangle
\oplus \mathbb C \langle \varphi^{2\bar1\bar2} \rangle
 \oplus \mathbb C \langle \varphi^{12\bar1\bar2} \rangle ,
\end{eqnarray*}
where we have listed the harmonic representatives with respect to the Hermitian metric with fundamental form $\omega=\sqrt{-1}\varphi^{1\bar1}+\sqrt{-1}\varphi^{2\bar2}$ instead of their classes.

We list the harmonic representatives with respect to the arbitrary Hermitian metric as in \eqref{eq:metric}:
\begin{eqnarray*}
H^{\bullet,\bullet}_{\overline\partial}(X) &=& \mathbb C \langle 1 \rangle
\oplus \mathbb C \langle \varphi^{1} \rangle
\oplus \mathbb C \langle \varphi^{\bar1}, \varphi^{\bar{2}} \rangle
\oplus \mathbb C \langle \varphi^{12} \rangle
\oplus \mathbb C \langle \varphi^{1\bar2}-\sqrt{-1} \, s  \varphi^{1\bar1}, \varphi^{2\bar1}+\sqrt{-1} \, s  \varphi^{1\bar1} \rangle
\oplus \mathbb C \langle \varphi^{\bar1\bar2} \rangle \\
&&
\oplus\, \mathbb C \,\langle -\frac{1}{2} \, r^{2}  \varphi^{12\bar1} + \frac{\sqrt{-1}}{2} \, u  \varphi^{12\bar2} ,
\frac{1}{2} \, s^{2} \varphi^{12\bar2}  + \frac{\sqrt{-1}}{2} \, \overline{u}  \varphi^{12\bar1} \rangle \\
&& 
\oplus \,\mathbb C\, \langle \frac{1}{2} \, s^{2} \varphi^{2\bar1\bar2} - \frac{\sqrt{-1}}{2} \, u  \varphi^{1\bar1\bar2} \rangle
 \oplus \mathbb C \langle \varphi^{12\bar1\bar2} \rangle , \\
H^{\bullet,\bullet}_{BC}(X) &=& \mathbb C \langle 1 \rangle \oplus\, \mathbb C \langle \varphi^{1} \rangle \oplus \mathbb C \langle \varphi^{\bar1} \rangle
\oplus \mathbb C \langle \varphi^{12} \rangle
\oplus \mathbb C \langle \varphi^{1\bar1}, \varphi^{1\bar2}, \varphi^{2\bar1} \rangle
\oplus \mathbb C \langle \varphi^{\bar1\bar2} \rangle \\
&&
\oplus\, \mathbb C\, \langle -\frac{1}{2} \, r^{2}  \varphi^{12\bar1} + \frac{\sqrt{-1}}{2} \, u  \varphi^{12\bar2} ,
\frac{\sqrt{-1}}{2} \, \overline{u}  \varphi^{12\bar1} + \frac{1}{2} \, s^{2} \varphi^{12\bar2} \rangle \\
&&
\oplus\, \mathbb C\, \langle \frac{1}{2} \, s^{2} \varphi^{2\bar1\bar2} - \frac{\sqrt{-1}}{2} \, u  \varphi^{1\bar1\bar2}, -\frac{1}{2} \, r^{2}  \varphi^{1\bar1\bar2} - \frac{\sqrt{-1}}{2} \, \overline{u}  \varphi^{2\bar1\bar2} \rangle
 \oplus \mathbb C \langle \varphi^{12\bar1\bar2} \rangle , \\
H^{\bullet,\bullet}_{A}(X) &=& \mathbb C \langle 1 \rangle \oplus \mathbb C \langle \varphi^{1}, \varphi^2 \rangle \oplus \mathbb C \langle \varphi^{\bar1}, \varphi^{\bar2} \rangle
\oplus \mathbb C \langle \varphi^{12} \rangle \oplus \mathbb C \langle \varphi^{\bar1\bar2} \rangle \\
&&
\oplus\, \mathbb C \,\langle s^2\varphi^{1\bar2}+\sqrt{-1}\,\overline{u}\, \varphi^{1\bar1},\,\,
s^{2}\varphi^{2\bar1}-\sqrt{-1}\,u\,\varphi^{1\bar1} ,\,\,
s^4\varphi^{2\bar2} -|u|^{2} \varphi^{1\bar1} \rangle
\\
&&
\oplus\, \mathbb C\, \langle \frac{1}{2} \, s^{2} \varphi^{12\bar2}  + \frac{\sqrt{-1}}{2} \, \overline{u}  \varphi^{12\bar1} \rangle
\oplus \mathbb C \langle \frac{1}{2} \, s^{2} \varphi^{2\bar1\bar2} - \frac{\sqrt{-1}}{2} \, u  \varphi^{1\bar1\bar2}  \rangle
 \oplus \mathbb C \langle \varphi^{12\bar1\bar2} \rangle ,
\end{eqnarray*}

We notice that for primary Kodaira surfaces {\itshape an invariant Hermitian metric is never geometrically-Dolbeault formal}, {\itshape e.g.} $\varphi^1 \wedge \bar\varphi^1$ is never Dolbeault-harmonic. In fact, Cattaneo and Tomassini noticed in \cite[Example 4.3]{cattaneo-tomassini} that primary Kodaira surfaces have a non-vanishing Dolbeault-Massey triple product, whence they are not Dolbeault formal in the sense of \cite{neisendorfer-taylor}.
Also, it is {\itshape never geometrically-Bott-Chern formal}, {\itshape e.g.} $\varphi^1 \wedge \varphi^{\bar1\bar2}$ is never Bott-Chern-harmonic. {\itshape The space of Aeppli-harmonic forms is never an algebra}, {\itshape e.g.} $\varphi^1\wedge\bar\varphi^1$ is never Aeppli-harmonic, {\itshape neither a module over the space of Bott-Chern harmonic forms}, {\itshape e.g.} $\varphi^1\wedge\bar\varphi^1$ is never Aeppli-harmonic.

The primary Kodaira surface has trivial canonical bundle, therefore $\mathrm{Ric}^{Ch}(\omega)=0$. Then the Chern-Ricci flow does not evolve invariant Hermitian metrics.

Then clearly on a primary Kodaira surface with invariant Hermitian metrics, \emph{the properties of geometric-Dolbeault formality, of geometric-Bott-Chern formality, of the Aeppli-harmonic forms having a structure of algebra, of the Aeppli-hamornic forms having a structure of module over Bott-Chern-harmonic forms, are all preserved along the Chern-Ricci flow.}

\subsection*{Secondary Kodaira surfaces}

The cohomologies of secondary Kodaira surfaces can be explicitly described \cite{teleman}, see \cite[Theorem 4.1]{angella-dloussky-tomassini}:
\begin{eqnarray*}
H^{\bullet,\bullet}_{\overline\partial}(X) &=& \mathbb C \langle 1 \rangle \oplus \mathbb C \langle \varphi^{\bar{2}} \rangle \oplus \mathbb C \langle \varphi^{12\bar1} \rangle \oplus \mathbb C \langle \varphi^{12\bar1\bar2} \rangle , \\
H^{\bullet,\bullet}_{BC}(X) &=& \mathbb C \langle 1 \rangle \oplus \mathbb C \langle \varphi^{1\bar{1}} \rangle \oplus \mathbb C \langle \varphi^{12\bar1} \rangle \oplus \mathbb C \langle \varphi^{1\bar1\bar2} \rangle \oplus \mathbb C \langle \varphi^{12\bar1\bar2} \rangle , \\
H^{\bullet,\bullet}_{A}(X) &=& \mathbb C \langle 1 \rangle \oplus \mathbb C \langle \varphi^{2} \rangle \oplus \mathbb C \langle \varphi^{\bar2} \rangle \oplus \mathbb C \langle \varphi^{2\bar2} \rangle \oplus \mathbb C \langle \varphi^{12\bar1\bar2} \rangle ,
\end{eqnarray*}
where we have listed the harmonic representatives with respect to the Hermitian metric with fundamental form $\omega=\sqrt{-1}\varphi^{1\bar1}+\sqrt{-1}\varphi^{2\bar2}$ instead of their classes.

As for the harmonic representatives for Dolbeault, Bott-Chern and Aeppli cohomologies, the situation is very similar to the Inoue case, only the computations for the class $[\varphi^{2\bar2}]\in H^{1,1}_{A}(X)$ being slightly different.

We list the harmonic representatives with respect to the arbitrary Hermitian metric as in \eqref{eq:metric}:
\begin{eqnarray*}
H^{\bullet,\bullet}_{\overline\partial}(X) &=& \mathbb C \langle 1 \rangle \oplus \mathbb C \langle \varphi^{\bar{2}} \rangle \oplus \mathbb C \left\langle -\frac{1}{2} r^2\varphi^{1\bar1\bar2}- \frac{\sqrt{-1}}{2} \overline{u} \varphi^{2\bar1\bar2} \right\rangle \oplus \mathbb C \langle \varphi^{12\bar1\bar2} \rangle  \\
H^{\bullet,\bullet}_{BC}(X) &=& \mathbb C \langle 1 \rangle \oplus \mathbb C \langle \varphi^{1\bar{1}} \rangle \oplus \mathbb C \left\langle -\frac{1}{2} r^2\varphi^{1\bar1\bar2}- \frac{\sqrt{-1}}{2} \overline{u} \varphi^{2\bar1\bar2} \right\rangle \\
&& \oplus \mathbb C \left\langle -\frac{1}{2} r^2 \varphi^{12\bar1} + \frac{\sqrt{-1}}{2} u \varphi^{12\bar2} \right\rangle \oplus \mathbb C \langle \varphi^{12\bar1\bar2} \rangle , \\
H^{\bullet,\bullet}_{A}(X) &=& \mathbb C \langle 1 \rangle \oplus \mathbb C \langle \varphi^{2} \rangle \oplus \mathbb C \langle \varphi^{\bar2} \rangle \\
&& \oplus \mathbb C \left\langle
|u|^2 \varphi^{1\bar1} - \sqrt{-1} \,s^2 u \varphi^{1\bar2} + \sqrt{-1} \,s^2 \overline{u}  \varphi^{2\bar1} + s^4 \varphi^{2\bar2}
\right\rangle \oplus \mathbb C \langle \varphi^{12\bar1\bar2} \rangle.
\end{eqnarray*}

We conclude that: {\em any invariant Hermitian metric on a secondary Kodaira surface is geometrically-Dolbeault formal, is geometrically-Bott-Chern formal, and the Aeppli-harmonic forms have a structure of module over Bott-Chern-harmonic forms. On the other hand, Aeppli-harmonic forms have a structure of algebra if and only if the metric is diagonal}.

The secondary Kodaira surface has torsion canonical bundle, therefore $\mathrm{Ric}^{Ch}(\omega)=0$. Then the Chern-Ricci flow does not evolve invariant Hermitian metrics.

Then clearly on a secondary Kodaira surface with invariant Hermitian metrics, \emph{the properties of geometric-Dolbeault formality, of geometric-Bott-Chern formality, of the Aeppli-harmonic forms having a structure of algebra, of the Aeppli-hamornic forms having a structure of module over Bott-Chern-harmonic forms, are all preserved along the Chern-Ricci flow.}
\end{proof}

We summarize the results in the last two Sections in Table \ref{table:summary}.

\begin{center}
\begin{table}[h!]
\begin{footnotesize}
  \begin{tabular}{c|ccccc|}
    \multirow{2}{*}{\textbf{surface}} & \textbf{Kotschick} & \textbf{Dolbeault} & \textbf{Bott-Chern} & \textbf{Aeppli harm. f.} & \textbf{Aeppli harm. f.} \\
    & \textbf{geom. form.} & \textbf{geom. form.} & \textbf{geom. form.} & \textbf{as algebra} & \textbf{as BC-module} \\
    \hline
    \hline
    \textbf{class $\mathrm{VII}^{b_2>0}$} & never & ? & ? & ? & ? \\     		\hline
	\textbf{Hopf} & always & always & always & diagonal & always \\
	{\scriptsize (invariant metrics)} &&&&& \\
	\hline
	\textbf{Inoue-Bombieri $S_M$} & always & always & always & diagonal & always \\
	{\scriptsize (invariant metrics)} &&&&& \\
	\hline
	\textbf{Inoue $S^\pm$} & always & always & always & diagonal & always \\
	{\scriptsize (invariant metrics)} &&&&& \\
	\hline
	\textbf{primary Kodaira} & never & never & never & never & never \\
	\hline
	\textbf{secondary Kodaira} & always & always & always & diagonal & always \\ 
	{\scriptsize (invariant metrics)} &&&&& \\
	\hline
  \end{tabular}
\caption{\label{table:summary}
Summary of Theorem \ref{thm:main} and Propositions \ref{prop:Kodaira-1} and \ref{prop:db-bc} concerning geometric formalities (for Kotschick, Dolbeault, Bott-Chern) and the structure of Aeppli-harmonic forms with respect to Hermitian metrics, respectively invariant Hermitian metrics on Hopf, Inoue, Kodaira surfaces.}
\end{footnotesize}
\end{table}
\end{center}

In view of further study, we notice that:
\begin{itemize}
\item in any mentioned cases, the Chern-Ricci flow starting at an invariant metric clearly preserves each one of the above properties, since it preserves diagonal metrics. (Compare also \cite[Proposition 3]{kotschick-terzic-pacific}, showing that, for certain $G$-homogeneous spaces, every $G$-invariant metrics is geometrically formal.) We ask whether this behaviour is more general, or whether there exists a counterexample for which the Chern-Ricci flow does {\em not} preserve the geometric formality in the sense of Kotschick, or any other of the geometric Hermitian formalities discussed above.
We notice that the above invariant metrics are Gauduchon, that is pluriclosed (also known as SKT) being defined on four-dimensional manifolds. Therefore, as the Referee kindly suggested to us, it may be interesting to further investigate the $6$-dimensional nilmanifolds admitting invariant SKT metrics as classified in \cite{fino-parton-salamon}.
\item Clearly, holomorphically-parallelizable manifolds do not provide such counterexamples when restricting to invariant metrics, since they have holomorphically-trivial canonical bundle, whence invariant Hermitian metrics are Chern-Ricci-flat.
Our attempts on four-dimensional Lie groups (possibly not admitting compact quotients), as in \cite{ovando} and references therein, or small deformations of the Iwasawa manifold \cite{nakamura, angella-1} still have not provided further examples.
\item The same question may be addressed for other geometric flows other than the Chern-Ricci flow, for example the Hermitian curvature flows in  \cite{streets-tian} or in particular the one studied in \cite{ustinovskiy}.
\item It could be interesting to further investigate Massey triple products and Dolbeault Massey products, see \cite{tomassini-torelli, cattaneo-tomassini}, or other Massey products, in particular on class VII surfaces with $b_2>0$ and on primary Kodaira surfaces.
\end{itemize}

\end{document}